\documentclass[a4paper,12pt,reqno]{amsart}
\usepackage{amsmath,amsthm,amsfonts,amssymb}
\usepackage[mathscr]{eucal} % Para letras Q, R, S da equação quadrática - Talvez alterar!!
\usepackage{indentfirst}
\usepackage[authoryear,round]{natbib}
\usepackage[a4paper,top=3cm,bottom=3cm,left=3cm,right=3cm]{geometry}
\usepackage{setspace} % Delete in final version.
\usepackage{caption}
\usepackage{subcaption}
\usepackage{microtype}
\usepackage{xcolor}
\usepackage{xspace}
\usepackage{graphicx}
\usepackage{hyperref}
\usepackage{dsfont}
\usepackage{accents}
\usepackage{color}
\usepackage[normalem]{ulem}

\theoremstyle{plain}
\newtheorem{teo}{Theorem}[section]

\newtheorem{lem}[teo]{Lemma}

\theoremstyle{definition}
\newtheorem{defn}[teo]{Definition}

\theoremstyle{remark}
\newtheorem*{obs}{Remark}

\theoremstyle{remark}
\newtheorem*{obss}{Remarks}

\numberwithin{equation}{section}
\numberwithin{figure}{section}
\numberwithin{table}{section}

\hypersetup{colorlinks,
linkcolor={darkblue},
citecolor={darkblue},
urlcolor={darkblue}}%

\newcommand{\bbZ}{\mathbb{Z}}

\newcommand{\Td}{\mathbb{T}_d}
\newcommand{\Km}{\mathbb{K}_m}
\newcommand{\Dmd}{\mathbb{D}_{m,d}}
\newcommand{\Dd}{\mathbb{D}_{2,d}}
\newcommand{\bbP}{\mathbb{P}}
\newcommand{\bbE}{\mathbb{E}}
\newcommand{\raiz}{\varnothing}
\newcommand{\dist}{\textnormal{dist}}

\newcommand{\cR}{\mathcal{R}}
\newcommand{\cG}{\mathcal{G}}
\newcommand{\cX}{\mathcal{X}}
\newcommand{\FM}[2]{\ensuremath{\textnormal{FM}(#1,#2)}}
\newcommand{\MFM}[2]{\ensuremath{\textnormal{MFM}(#1,#2)}}
\newcommand{\s}[2]{\ensuremath{{#1}\rightarrow{#2}}}
\newcommand{\ns}[2]{\ensuremath{#1 \nrightarrow #2}}
\newcommand{\m}[2]{\ensuremath{#1 \leadsto #2}}
\newcommand{\cam}[2]{\ensuremath{{#1}\stackrel{\textit{\scriptsize{c\,}}}{\rightarrow}{#2}}}
\newcommand{\ncam}[2]{\ensuremath{#1 \stackrel {\textit {\scriptsize {c \,}}}{\nrightarrow} #2}}
\newcommand{\cvis}{\mathcal{T}}
\newcommand{\fA}{A(\alpha, p)}
\newcommand{\fB}{B(\alpha, p)}
\newcommand{\fC}{C(\alpha, p)}
\newcommand{\fphi}{\phi(r)}
\newcommand{\fgamma}{\gamma(\alpha, p)}
\newcommand{\fQ}{\mathscr{Q}(\alpha, p)}
\newcommand{\fR}{\mathscr{R}(\alpha, p)}
\newcommand{\fS}{\mathscr{S}(\alpha, p)}
\newcommand{\fr}{r(p,m,d)}

\newcommand{\fpsi}{\psi(\alpha,p)}

\newcommand{\dado}{{\,}|{\,}}

\newcommand{\ind}{\mathds{1}}

\definecolor{bordo}{rgb}{0.8,0,0.3}
\colorlet{darkblue}{blue!70!black}

\begin{document}

\title[Frog model on free products of complete graphs]{The frog model with death and drift on free products of complete graphs}

\author[N.~L. Garcia]{Nancy Lopes Garcia}
\address[N.~L. Garcia and E. Lebensztayn]{Instituto de Matem\'atica, Estat\'istica e Computa\c{c}\~ao Cient\'ifica, Universidade Estadual de Campinas (UNICAMP), CEP 13083-859, Cam\-pi\-nas, SP, Brasil.}
\email[N.~L. Garcia]{nancyg@unicamp.br}

\author[E. Lebensztayn]{Elcio Lebensztayn}
\email[E. Lebensztayn]{lebensz@unicamp.br}

\author[J. Utria]{Jaime Utria}
\address[J. Utria]{Instituto de Matem\'atica e Estat\'istica, Universidade Federal Fluminense (UFF), CEP 24210-201, Niter\'oi, RJ, Brasil.}
\email[J. Utria]{jutria@id.uff.br}

\thanks{This study was financed by the S\~ao Paulo Research Foundation (FAPESP), Brazil. Processes Number \#2023/13453-5 and \#2025/04134-9.}

\keywords{}
\subjclass[2020]{Primary ; Secondary .}
\date{\today}

\begin{abstract} We study the frog model with death and drift on $\Dmd$, the free product of $d+1$ copies of the complete graph of order $m$.
Active and inactive particles are located at the vertices of $\Dmd$. Each active particle performs a $\alpha$-biased random walk towards the root of $\Dmd$, dying after a random lifetime with a geometric distribution of parameter $1-p$.
Each inactive particle remains dormant until an active particle visits its location.
We present conditions on the parameters $\alpha$ and $p$ for the process to die out almost surely and to survive with positive probability. Our proofs are based on comparisons of the model with simple and multi-type branching processes.    
\end{abstract}

\maketitle 

\setstretch{1.4} % Delete in final version.

\section{Introduction}
\label{S: Introduction}

We consider the \emph{frog model with death and drift} on a finitely generated group, extending the model beyond classical lattices and regular trees. As a first step, we focus on $\Dmd$ for $m \geq 2$ and $d \geq 2$, namely, the \emph{free product} of $d+1$ copies of the complete graph of order $m$, viewed as the Cayley graph of a finitely generated group. A vertex of $\Dmd$ is singled out (the identity element of $\Dmd$) and is called its root.
Initially, there is a single active frog at the root and one sleeping frog at each non-root vertex. At each instant of time, each active particle may disappear with probability $1-p$. Active frogs move toward the root with probability $\alpha$; otherwise, they move away from the root to a uniformly sampled neighbor vertex. Frogs at the root that survive stay in place with probability $\alpha$ or move away from the root to a uniformly sampled child vertex with probability $1-\alpha$. When an active particle hits a vertex containing a sleeping one, the latter is activated, and starts its own life and $\alpha$-biased random walk, performing exactly the same dynamics, independent of everything else.  We denote this model by $\FM{\Dmd}{\alpha,p}$. We say that a particular realization of the frog model \emph{survives} if there is at least one active particle at every instant of time. Otherwise, we say that it \emph{dies out}. Here we are interested in answering the following question: for which values of the pair $(\alpha,p)$ does the $\FM{\Dmd}{\alpha,p}$ survive with positive probability or die out almost surely?

For the frog model with death, but not drift, the question of phase transition at which the model survives with positive probability was first addressed by \citet{AMP-PT}, with focus on hypercubic lattices $\bbZ^d$ for $d \geq 2$ and on homogeneous trees $\Td$ of degree $d+1 \geq 3$, whereas on $\mathbb{Z}$, the authors showed that there is no phase transition. \citet{FMS-Mono} show that the critical probability of the frog model is not a monotonic function of the graph. Recently, \citet{CM-2025frog} study the frog model in random environment on $\mathbb{Z}$ in the case that $p$ follows a beta distribution with parameters $a$ and $b$, showing that the model undergoes a phase transition with respect to survival at $b =1/2$.  
To the best of the authors’ knowledge, the only paper dealing with the frog model on a Cayley graph of a finitely generated group (with the exception of $\mathbb{Z}^d$) is \citet{coletti2021asymptotic}; in this paper, they proved a shape theorem result for the model with no death and no drift.
The question of phase transition for a continuous-time variant of the frog model with deterministic lifetimes on general vertex-transitive graphs is investigated in \citet{angel2026frog}.
Our paper is closely related to \citet{LMP-IUB, GR-FMRT, LU-NUB, LU-PTBT, GP}, all of which focus on the derivation of bounds for the critical probability of the frog model with geometric lifetimes on trees. The novelty here is that we incorporate both death and drift parameters and consider the process evolving on a more complex graph structure, allowing for the presence of cycles rather than restricting to trees.
By doing this, we provide a more robust framework that generalizes the bounds previously found in the literature.
% As far as we know, under this setting, the derivation of bounds for the critical probability of the frog model has never been studied in the literature.

For the frog model without death ($p=1$), in which all active frogs live perpetually, a fundamental problem is whether the root is visited by infinitely many particles. The first published paper is due to \citet{TW}, where it was referred to as the ``egg model". They proved that, almost surely infinitely many frogs visit the root of $\bbZ^d$, for $d \geq 3$. \citet{HJJ-RT} establish that there is a phase transition in the dimension of the $d$-ary tree, by proving recurrence for $d=2$ and transience for $d\geq 5$; the cases $d=3$ and $d=4$ remain open. \citet{CNTSF-RTFD} consider the frog model with drift on $\bbZ^d$ and present conditions on the parameters for recurrence and transience. \citet{BNYMS-FMTD} study the model on $d$-ary trees, providing a uniform upper bound on the minimal drift so that the one-per-site frog model is recurrent. For the question of recurrence and transience for the frog model with drift and death on $d$-ary trees, see \citet{ahmed2025frog}.  
 
\section{Formal definitions}% and main results}

We begin by giving some basic definitions from Geometric Group Theory.
\subsection*{Graphs} Let $G = (V, E)$ be an infinite connected locally finite graph, with vertex set $V$ and edge set $E$. We denote an unoriented edge with endpoints $u$ and $v$ by $uv$. Vertices $u$ and $v$ are said \emph{neighbors} if they belong to a common edge $uv$; we denote this by $u \sim v$. The \emph{degree} of a vertex is the number of neighbors. A \emph{path} of length $n$ from $u$ to $v$ is a sequence $u=u_0, \ldots, u_n = v$ of vertices such that $u_i \sim u_{i+1}$. The \emph{graph distance} $\dist(u,v)$ between $u$ and $v$ is the minimal length of a path connecting the two vertices. A path from $u$ to $v$ with length $\dist(u,v)$ is called a \emph{geodesic}. A graph is called \emph{geodetic} if there exists a unique geodesic between every pair of vertices. We say that $G$ is $M$-regular if all vertices have the same degree $M$. A \emph{tree} is a connected graph without cycles or loops, where by a cycle in a graph we mean a sequence of neighboring vertices $u_0, \cdots, u_n$, $n \geq 3$, with no repetitions besides $u_n = u_0$. A \emph{cut-vertex} is a vertex whose removal will disconnect the graph.

\subsection*{Free products of groups and Cayley graphs} Let $\cG$ be a finitely generated infinite group with identity element $\raiz$ and let $S$ be a symmetric set of generators of $\cG$. We usually write the group operation multiplicatively. 
The \emph{Cayley graph} $\cX(\cG,S)$ of $\cG$ with respect to $S$ has vertex set $\cG$, and two elements $u,v \in \cG$ are neighbors if $u^{-1}v\in S$. The resulting graph is connected, locally finite, and homogeneous (all vertices have the same degree $|S|$). We call the vertex $\raiz$ the \emph{root} of the graph.
Let $(\cG_i,\raiz_i)$, $i \in \mathcal{I}$ a finite family of groups with roots $\raiz_i \in \cG_i$. We define the \emph{free product} $\displaystyle (\cG,\raiz) = \mathop{*}_{i \in I} (\cG_i,\raiz_i)$ as follows. We connect all the $\cG_i$ by identifying all the roots $\raiz_i$ in a single common root $\raiz$ while keeping the rest of $\cG_i$ disjoint. The free product consists of all words $u_1\cdots u_n$ of arbitrary finite length $n \geq 0$, where each letter $u_i \in \cG_i' = \cG_{i}\setminus \{\raiz_i\}$, such that no two successive letters come from the same $\cG_i'$. Formally, let $\mathfrak{i}: \cG_i' \to \mathcal{I}$ be defined by $\mathfrak{i}(u) = i$; the free product is the set
%let $\cG_i' = \cG\setminus \{\raiz_i\}$ for every $i \in \mathcal{I}$
\begin{equation}
\label{E: free group}
\cG = \{u_1u_2 \cdots u_n: n\geq 0, u_j \in \bigcup_{i\in\mathcal{I}} \cG_i', \mathfrak{i}(u_j) \neq \mathfrak{i}(u_{j+1})\};
\end{equation}
the empty word $\raiz$ is obtained when $n = 0$. Moreover, $\cG$ is a group with the identity element $\raiz$ and the group operation is concatenation with possible cancellation in the middle, in order to get the reduced form as in \eqref{E: free group}. Furthermore, if $\cX_i = \cX_i(\cG_i,S_i)$ is the Cayley graph of $\cG_i$ with respect to $S_i$, then $\cX = \cX(\cG,S)$ is the Cayley graph of $\cG$ with respect to the generating set $S = \bigcup_{i\in\mathcal{I}} S_i$. 
To construct the Cayley graph of the free product given in \eqref{E: free group}, we proceed as follows: 
\begin{enumerate}
\item At the root $\raiz$, the $\cX_i$ are glued at their respective roots. 
\item At each other vertex of $\cX_i$, we attach copies of all the $\cX_j$, $j\neq i$, by their roots.
\item Inductively, at each new vertices $u_iu_j$ we then attach copies of the $\cX_k$, $k \neq j$.
\end{enumerate}
 
Thus, $\cX$ has a \emph{cactus graph} structure whose leaves are copies of the $\cX_i$. Figure \ref{Fig: free-products of two graphs} shows a piece of the Cayley graphs of two free products of groups. 

\subsection*{Frog model with death and drift on free products of complete graphs}
To define the frog model on a free product of groups, we need a few definitions. We start by describing the state space in which the model lives.
Throughout this paper, for \(m \geq 2\) and \(d \geq 1\), we denote by \(\Km\) the complete graph with $m$ vertices and by \( \Dmd := \Km * \cdots * \Km \) the free product of $(d+1)$ copies of $\Km$. We view $\Dmd$ as the Cayley graph of the group $\bbZ_m \ast \cdots \ast \bbZ_m$ ($d+1$ times), where $\bbZ_m$ is the cyclic group of order $m$ with respect to the generating set $S_m = \bbZ_m \setminus \{\raiz\}$. Note that \(\Dmd \) is a geodetic and $\Delta$-regular graph in which each vertex has degree $\Delta = (d+1)(m-1)$. 
In particular, $\mathbb{D}_{2,1}$ is isomorphic (as a graph) to $\mathbb{Z}$, and $\Dd$ is the homogeneous tree $\Td$ of degree $(d+1)$. Figure \ref{Fig: D4_2} shows a piece of the Cayley graph $\mathbb{D}_{4,2}$; the resulting graph is a \emph{tree of cliques}. With a slight abuse of notation, we denote both the graph and its vertex set by $\Dmd$.
%With typical abuse of notation, we denote by $\Dmd$ the graph itself and its set of vertices. 
Now, we define a partial order on the vertices of $\Dmd$ as follows: for $u,v \in \Dmd$, we write $u \leq v$ if $u$ lies on the unique geodesic from $\raiz$ to $v$; $u < v$ if $u \leq v$ and $u \neq v$, $u$ (resp.~$v$) is called \emph{predecessor} (resp.\ \emph{successor}) of $v$. For any non-root vertex $u$, we distinguish three \emph{types} of neighbors: the \emph{parent} of $u$, the \emph{children} of $u$, and the \emph{siblings} of $u$. More precisely, the parent of $u$, denoted by $u^-$, is the unique predecessor of $u$ along the geodesic to the root; a child of $u$, denoted by $u^+$ is any neighbor of $u$ such that $\mathrm{dist}(\raiz,u^+) = \mathrm{dist}(\raiz,u) + 1$; and a sibling $u^*$ is any remaining neighbor of $u$ (that is, any vertex at the same \emph{level} of $u$, i.e., vertices satisfying $\dist(\raiz,u*) = \dist(\raiz,u))$. Note that any non-root vertex $u$ has one parent, $d(m-1)$ children, and $m-2$ siblings. Moreover, the root is called the \emph{ancestral} vertex since it has only children. 

\begin{figure}[ht]
\includegraphics[scale = 0.7]{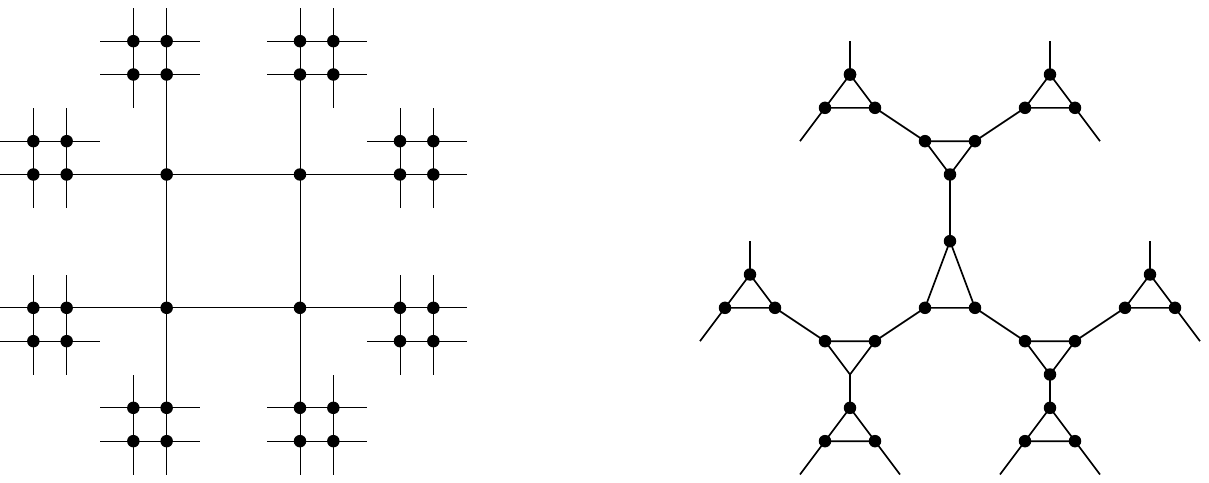}
\caption{Left: structure of $\mathbb{K}_2 \ast \mathbb{K}_2 \ast \mathbb{Z}_4$. Right: structure of  $\mathbb{Z}_2 * \mathbb{Z}_3$.}
\label{Fig: free-products of two graphs}
\end{figure}

In general, for a \emph{simple symmetric random walk} (SSRW) on a finitely generated group $\cG$ with generating set $S$, we mean a SSRW on the Cayley graph $\cX(\cG, S)$, that is, is a Markov Chain with state space $\cG$ and one-step transition probabilities given by $\pi(u,v) = 1/|S|$, $u\sim v$. 
Also, we consider \emph{$\alpha$-biased random walks} on $\cG$, in which particles  move according to the following one-step transition probabilities, which depend on the parameter $\alpha \in [0,1]$:
\begin{equation}\label{F: drift-RW}
\begin{array}{cc}
\begin{aligned}
\pi_{\alpha}(u,v) = \begin{cases}
\alpha, & v=u^-,\\
\frac{1-\alpha}{\Delta -1}, & v \neq u^-.
\end{cases} 
\end{aligned} %\quad\text{and} & \quad
% \begin{aligned}
% \tilde{\pi}_{\alpha}(\raiz,v) = \begin{cases}
% \alpha, & v=\raiz,\\
% \frac{1-\alpha}{\Delta}, & \raiz \sim v.
% \end{cases} 
% \end{aligned}
\end{array}
\end{equation}
%Besides frogs at the root stays there with probability $\alpha$ and choose a neighbor vertex with probability $(1-\alpha)/\Delta$.
Note that if $\alpha = 1/\Delta$, we obtain the simple symmetric random walk on $\cG$. For more details on random walks on groups and graphs, we refer the reader to \citet{woess2000random}.

In the present article, we study the frog model with death and drift on $\Dmd$ for $m \geq 2$ and $d \geq 2$, in which each active frog placed at any non-root vertex (if it survives) chooses a neighbor according to \eqref{F: drift-RW}. To define the model in a formal way we consider $\{\Xi_p^u: u \in \Dmd\}$, $\{(S_n^u)_{n \geq 0}: u \in \Dmd\setminus\{\raiz\}\}$ independent collections of i.i.d.\ random variables. Here $\Xi_p^u$ is a geometric random variable with parameter $1-p$ and stands for the lifetime of the particle located at $u$, $(S_n^u)_{n\geq 0}$ is a discrete time $\alpha$-biased random walks on $\Dmd$ starting from $u\neq \raiz$. Active particles at the root that survive stay in place with probability $\alpha$ or jump away from the root to a uniformly chosen child with probability $1-\alpha$.

We define the (virtual) \emph{range} of the vertex $u$ as the set of vertices visited by the particle originally placed at vertex $u$ during its lifetime, 
\[ \mathcal{R}_u = \{S_n^x : 0 \leq n < \Xi_p^u\}. \]
The range becomes ``real" in the case when $u$ is actually visited (and thus the sleeping frog from there is awakened). Now, for $u, v \in \Dmd$, we introduce the following notations: $\s{u}{v}$ if $v \in \mathcal{R}_u$, and $\ns{u}{v}$, otherwise.
We denote by $\FM{\Dmd}{\alpha,p}$ the frog model on the Cayley graph $\Dmd$ with drift and survival parameters $\alpha$ and $p$, respectively. For the symmetric case, i.e., $\alpha = 1/\Delta$, we denote the model by $\FM{\Dmd}{p}$.

We emphasize that we are, in fact, dealing with a \emph{long-range oriented percolation model} defined as follows: For each pair $(u,v) \in \Dmd \times \Dmd$ with $u \neq v$ we declare the oriented edge from $u$ to $v$ \emph{open} if $\s{u}{v}$ and \emph{closed} otherwise.

% Let us consider the following definition.
% \begin{defn}\label{D: survival/extinction}
%    We say that a particular realization of the frog model \emph{survives} if there is at least one active particle at every instant of time. Otherwise, it \emph{dies out}. 
% \end{defn}
%From Definition \ref{D: survival/extinction} 
We conclude that the survival of $\FM{\Dmd}{\alpha,p}$ is equivalent to the infiniteness of the cluster of $\raiz$ in the percolation model described above. Therefore, the frog model survives if there exists an infinite sequence of vertices $\raiz = u_0,u_1,\ldots$, in such a way that $\s{u_j}{u_{j+1}}$, for any $j\geq 0$. This approach plays a central role in the proofs of most of the results in this paper. 

By a coupling argument, we see that $\bbP(\FM{\Dmd}{\alpha,p} \text{ survives})$ is a non-decreasing function of $p$, and therefore we define the \emph{critical probability} as
\[
p_c(\Dmd,\alpha) = \inf\{p: \bbP(\FM{\Dmd}{\alpha, p} \text{ survives}) > 0\}.
\]

As usual, we say that the model exhibits a phase transition if $p_c(\Dmd, \alpha) \in (0,1)$. In the symmetric case, we omit the dependence on $\alpha$ and write $p_c(\Dmd)$ for the critical probability.

% \begin{figure}[ht]
% \includegraphics[scale=0.8]{D4_2.pdf}
% \caption{Structure of $\mathbb{D}_{4,2} = \mathbb{K}_4 \ast \mathbb{K}_4 \ast \mathbb{K}_4$ (the root is highlighted in red).}
% \label{Fig: D4_2}
% \end{figure}

\begin{figure}
    \centering
    \includegraphics[width=0.5\linewidth]{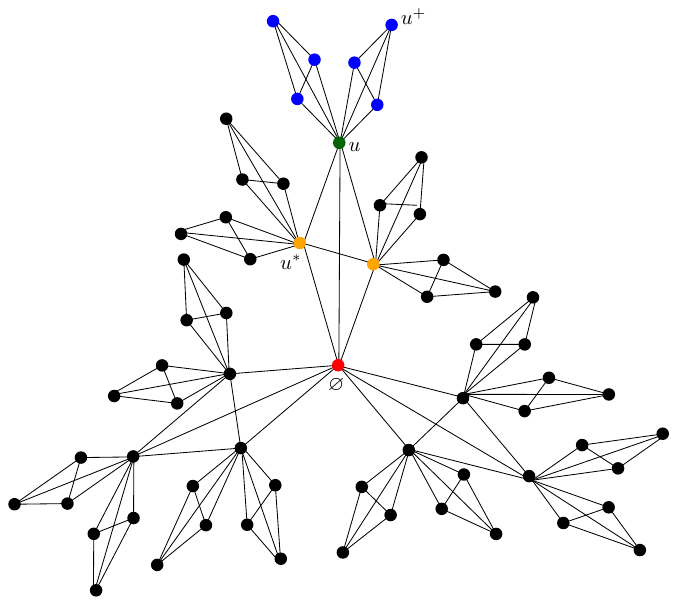}
    \caption{Structure of $\mathbb{D}_{4,2} = \mathbb{K}_4 \ast \mathbb{K}_4 \ast \mathbb{K}_4$. The root (parent of vertex $u$) is shown in red, the vertex $u$ in green, its children in blue, and its siblings in orange.}
    \label{Fig: D4_2}
\end{figure}

\section{Main results}

First, we state the results for the general case $(\alpha,p) \in [0,1]$.
\begin{teo}
\label{T: Extinction}
Let $m \geq 2$ and $d \geq 2$ be fixed integers. 
We define the function
\begin{equation}
\label{F: LE}
\fpsi = \frac{p}{2} \left(2-\alpha +\sqrt{(2-\alpha)^2-\frac{4 (1-\alpha)((m-2)(2-\alpha)+\alpha d(m-1))}{(d(m-1)+m-2)^2}}\right).
\end{equation}
% and consider the set
% \[ \cExt = \{(\alpha, p): \fpsi < 1\}. \]
Then, \(\FM{\Dmd}{\alpha,p}\) dies out almost surely for $(\alpha, p)$ such that $\fpsi < 1$.
\end{teo}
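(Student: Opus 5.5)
The plan is to dominate the set of vertices that are ever awakened by the total progeny of a two-type Galton--Watson process. We then identify $\fpsi$ as the Perron root of its mean matrix, and invoke the classical fact that a (positively regular) multi-type branching process with spectral radius strictly less than $1$ dies out almost surely. By the percolation reformulation in Section~2, survival of $\FM{\Dmd}{\alpha,p}$ is equivalent to the cluster of $\raiz$ being infinite. So it suffices to show that the expected cardinality of this cluster is finite when $\fpsi<1$.

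\textbf{Step 1 (genealogy and types).} Each activated vertex $v\neq\raiz$ is assigned a parent: the vertex $u$ whose frog first woke $v$. Since $\Dmd$ is a tree of cliques, the geodesic structure is rigid. A frog standing at $u$ can enter a vertex in two ways. It can enter a \emph{child} clique, by moving to some $u^+$ (first step down from a cut-vertex). Or it can enter a \emph{sibling} position $u^*$ inside the current clique. I will label newly awakened vertices by type $1$ (reached through a child move) or type $2$ (reached through a sibling move). The type of the parent matters because the parent's frog, having been born at a vertex of a given type, has a different set of ``new'' neighbours available. For instance, a type-$2$ vertex's clique-mates may already have been explored. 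I will deliberately overcount by ignoring all interference between frogs (distinct frogs' ranges may overlap). This only enlarges the offspring, so the resulting branching process stochastically dominates the cluster size.

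\textbf{Step 2 (offspring means).} For a single $\alpha$-biased walk killed at rate $1-p$, I will compute, or bound from above, the expected number of vertices of each type that it visits for the first time. I will decompose the walk's trajectory by successive steps. Each step survives with probability $p$ and goes to the parent with probability $\alpha$. It goes to one of the $m-2$ siblings with total probability $(1-\alpha)(m-2)/(\Delta-1)$, and to one of the $d(m-1)$ children with total probability $(1-\alpha)d(m-1)/(\Delta-1)$, where $\Delta-1=d(m-1)+m-2$. Returns toward the root by one step and subsequent excursions generate the factor $2-\alpha$ in \eqref{F: LE}. The natural bound is a generating-function (first-passage) computation summing over paths weighted by $p^{\text{length}}$. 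This yields a $2\times2$ mean matrix $M=(M_{ij})$. I expect its characteristic polynomial to be
\[
\lambda^2 - p(2-\alpha)\lambda + \frac{p^2(1-\alpha)\bigl((m-2)(2-\alpha)+\alpha d(m-1)\bigr)}{(d(m-1)+m-2)^2}=0,
\]
whose largest root is exactly $\fpsi$.

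\textbf{Step 3 (conclusion and main obstacle).} Given $\rho(M)=\fpsi<1$, the expected total progeny $\sum_n \mathbf{1}^{\top}M^n\mathbf{1}$ is finite. Hence the dominating process, and therefore the cluster of $\raiz$, is finite almost surely, and $\FM{\Dmd}{\alpha,p}$ dies out. The root itself behaves slightly differently (it holds with probability $\alpha$), but it contributes only a finite initial generation and does not affect finiteness. The main obstacle is Step 2: choosing the type decomposition and the overcounting so that the entries of $M$ come out in closed form and reproduce precisely the polynomial above. I would first try a first-step analysis on the walk's distance to its starting clique. This gives a pair of linear equations for the expected numbers of new child- and sibling-vertices reached, and solving them should produce $M$. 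If the exact visit counts are intractable, I would replace them with the cruder bound ``number of distinct new vertices $\leq$ number of steps of a given kind'', which is still sufficient for extinction.
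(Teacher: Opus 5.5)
Your Step~2 carries the whole proof, and as set up it cannot produce $\psi(\alpha,p)$. The polynomial you write down has trace $p(2-\alpha)$ and determinant $p^2\beta(\alpha+2(m-2)\beta)$. It is the characteristic polynomial of the paper's matrix $M'$, which is a \emph{per-step} mean matrix. In the one-frog-at-a-time dynamics an individual is an active frog at a single instant. Its offspring are the frog itself after one move plus the frog it wakes, if any: two particles after a move to a fresh vertex, one after a move to the parent, none at death. That is where $2-\alpha=\alpha\cdot 1+(1-\alpha)\cdot 2$ comes from, not from excursions.

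A matrix of expected numbers of vertices woken over a whole lifetime is a different object. For $m=2$, $\alpha=0$ a frog walks straight down and wakes $p/(1-p)$ frogs on average, while $\psi=2p$. The thresholds agree there but the matrices do not. So ``reproducing precisely the polynomial above'' is not an attainable target for your process, and you give no argument linking its spectral radius to $\psi$.

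The overcounts you propose also lose too much. Suppose the only vertices you discount are $v$ and the vertex that woke it. Then for $m=2$ your two types collapse, and your process is exactly the Galton--Watson process of Section~\ref{SS: BPT}. In the symmetric case with $d=2$ it is subcritical only for $p<\tfrac12\sqrt{3/2}\approx 0.612$ (Theorem~\ref{T: LBBPT}), whereas $\psi<1$ holds up to $p=6/(5+\sqrt{21})\approx 0.626$. Your fallback is weaker still. Even counting only non-parent steps gives the condition $p(2-\alpha)<1$, which defines a strictly smaller region than $\psi<1$ whenever $\det M'>0$.

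What is missing are two deterministic facts:
\begin{itemize}
\item[(i)] $u^-$ is a cut vertex separating $u$ from $\raiz$. Hence the visited set is always closed under taking parents, and a move toward the root never wakes anyone.
\item[(ii)] The vertex a frog has just left is visited. So after a move to the parent or to a sibling, one of the $\Delta-1$ non-parent moves is wasted.
\end{itemize}
Remembering this one bit is exactly the type structure of the paper's $\alpha$-TTBP. Type~1 means only the parent is visited, with law $p_1$. Type~2 means at least two visited neighbours, with law dominated by $p_2$. This structure yields $M'$. The paper then couples this process with the modified frog model through common uniforms, so that the branching process always has at least as many type-1 particles and at least as many particles in total.

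You can keep a per-vertex formulation: your labels ``child move''/``sibling move'' are precisely the types of a newborn frog in that process. To do so, write $M'=F+T$, with $T$ the mover's part ($\rho(T)=p$) and $F$ the woken frog's part. Your lifetime offspring matrix is then $(I-T)^{-1}F$, not a matrix with your stated characteristic polynomial. You would still need (i), (ii) and the next-generation lemma $\rho\big((I-T)^{-1}F\big)<1 \iff \rho(F+T)<1$ to conclude.
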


\begin{teo}
\label{T: Survival}
Let $\beta = \frac{1-\alpha}{\Delta-1}$ and assume $m \geq 2$ and $d \geq 2$ are fixed integers. We define the functions
% \vermelho{[Novas fórmulas -- favor conferir. 
% Talvez dizer no começo do Teorema (ou da Seção) que m e d são fixados e retirar m, d da notação $\fA$, $\fgamma$, etc]:}
{\allowdisplaybreaks
\begin{equation}
\label{F: function B}
\begin{aligned}
\fphi&= -d(m-1)r^2 + 2d(m-1)r,\\[4pt] 
\fA &= 
\frac{
1 - \beta p (m-2)-\sqrt{[1 - \beta p (m-2)]^2 - 4 \alpha \beta p^2 d (m-1)
}}{2 \beta p d (m-1)},\\[10pt]
\fgamma &= 1 - \beta p \, [d (m-1) \fA + m - 3],\\[10pt]
\fQ &= -\alpha p \, [\fgamma + \beta p (m-2)],\\[10pt]
\fR &= \fgamma \, [1 - \beta p (d-1) (m-1) \fA] - \beta p^2 (\alpha + \beta) (m-2),\\[10pt]
\fS &= -\beta p \, [\fgamma + \beta p (m-2)],\\[10pt]
\fB &= \frac{-\fR + \sqrt{\fR^2-4 \, \fQ \, \fS}}{2 \, \fQ}.
\end{aligned}
\end{equation}}%
% \vermelho{[conferir as soluções de A, B e C]:}
% {\allowdisplaybreaks
% \begin{equation}
% \label{F: functions ABC}
% \begin{aligned}
% \fphi&= -d(m-1)r^2 + 2d(m-1)r\\ %\vermelho{\text{survival function (directly by BP or via renewal equation)}}\\
% \fA &= 
% \frac{
% 1 - p(m-2)\beta-\sqrt{\big[1 - p(m-2)\beta\big]^2 - 4p^2 d(m-1)\beta\alpha
% }}{2p d(m-1)\beta},\\[10pt]
% \fC &= 
% \frac{(\beta + \alpha \fB)p}
% {1 - p\beta\big[(m-3) + d(m-1)\fA\big]},\\[10pt]
% \fxi &= 1 - p\beta[(d-1)(m-1)\fA + (m-2)\fC],\\[10pt]
% \fB &= \frac{\fxi-\sqrt{\fxi^2 - 4p^2\alpha\beta[1+(m-2)\fC]}}{2p\alpha},
% \end{aligned}
% \end{equation}}%
% Also consider the set
% \[ \cSurv = \{(\alpha, p):  \phi(\fB)> 1\}. \]
Then, \(\FM{\Dmd}{\alpha,p}\) survives with positive probability for  $(\alpha, p)$ such that $\phi(\fB)> 1$.
\end{teo}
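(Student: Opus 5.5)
Since survival of $\FM{\Dmd}{\alpha,p}$ is equivalent to the open cluster of $\raiz$ being infinite in the long-range oriented percolation model, it suffices to find a supercritical Galton--Watson process whose survival forces survival of the frog model. We keep only a restricted family of open edges: each active particle at $u$ counts as offspring only the \emph{children} $u^+$ (vertices at the next level in the cliques hanging below $u$) that its walk visits. Distinct vertices generate disjoint subtrees below them. Moreover, offspring are counted only through visits to vertices strictly below the parent that are not already owned by another lineage. As a result, the progeny of distinct vertices use disjoint collections of lifetimes and walks, and this yields a genuine branching process. More precisely, the number of children of $u$ whose sleeping frogs are activated by the frog from $u$ stochastically dominates an offspring law whose mean is $\phi(\fB)$.

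\textbf{Hitting probabilities.} The quantities $\fA$ and $\fB$ should be the probabilities that a killed $\alpha$-biased walk hits a prescribed vertex. I would define
\begin{itemize}
\item $\fA$ as the probability that a walk started at a vertex returns to its parent before dying, given that it first moves into the subtree below;
\item $\fB$ as the probability that the walk started at $u$ ever visits a fixed child $u^+$ before dying.
\end{itemize}
Each of these is obtained from a first-step decomposition, using the geodetic cactus structure. From a vertex, the walk moves to the parent w.p. $\alpha p$, to each of the $m-2$ siblings w.p. $\beta p$, and to each of the $d(m-1)$ children w.p. $\beta p$. A quadratic fixed-point equation then gives $\fA$ as the smaller root of
\[\beta p d(m-1)A^2-(1-\beta p(m-2))A+\alpha p=0.\]
An excursion into a child subtree returns with probability $\fA$. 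Solving the linear system for the probability of hitting a specified child among the siblings and the other children yields the factor $\fgamma$. Eliminating the sibling variable gives a second quadratic $\fQ B^2+\fR B+\fS=0$, whose appropriate root is $\fB$. I would check the root selection by continuity at $p\to 0$, where the probability must vanish.

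\textbf{Mean offspring.} The frog at $u$ has $d(m-1)$ children, organized as $d$ cliques of size $m-1$ below $u$ (together with $u$ they form $d$ copies of $\Km$). Hitting one child and then another within the same clique are correlated events, so the expected number of distinct children visited is computed by inclusion–exclusion over pairs, or equivalently as a lower bound $\sum P(\text{hit }v)-\sum P(\text{hit }v,v')$. Hence I would aim to show that the mean number of visited children is at least $d(m-1)(2\fB-\fB^2)=\phi(\fB)$. The expression $2r-r^2=1-(1-r)^2$ suggests the following argument: the walk has at least two independent-like opportunities to reach a child (a direct attempt and an attempt after returning), each succeeding with probability at least $r$, or equivalently a comparison with two independent trials. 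Given such a mean bound, the standard criterion (mean $>1$ implies positive survival probability) applied to the embedded branching process gives survival of $\FM{\Dmd}{\alpha,p}$ whenever $\phi(\fB)>1$.

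\textbf{Main obstacle.} The hard step is justifying the mean $\phi(\fB)$ rigorously: correctly identifying which event $\fB$ is, and why the visited-children count dominates a quantity with mean $d(m-1)(1-(1-\fB)^2)$. This requires a careful coupling, and proving it is the step I expect to be the main difficulty. The independence issue, namely that frogs awakened in different subtrees must not share randomness, also needs care. I would handle it by discarding visits to vertices outside a vertex's own descendant cone, so that the dominated process is an honest Galton--Watson tree.
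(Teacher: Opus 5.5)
\textbf{There is a genuine gap in the mean-offspring step. The bound you aim for is false, so no coupling can prove it.} You define $\fB$ as the probability that the frog at $u$ ever visits a fixed child $u^+$ during its lifetime, which is correct and is also the paper's definition. By linearity of expectation, the expected number of children of $u$ woken directly by the frog at $u$ is then exactly $d(m-1)\fB$; inclusion--exclusion plays no role in an expected count. Since $\fB<1$, this is strictly smaller than $\phi(\fB)=d(m-1)\fB(2-\fB)$ whenever $\fB>0$. Your one-generation process is therefore supercritical only when $d(m-1)\fB>1$, which gives a strictly smaller survival region than the theorem asserts. The heuristic of ``two opportunities'' for the same walk cannot fix this, because $\fB$ already counts every return of that walk. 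Your process is exactly the case $n=1$ of the paper's construction.

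The factor $2-\fB$ comes from a \emph{different} frog: the one sleeping at an intermediate vertex of a geodesic. The paper fixes $n\ge 1$ and declares $v\in B_n(u)$ an offspring of $u$ when $\cam{u}{v}$ holds. Along the geodesic $u=u_0<\dots<u_n=v$, the frog from $u_0$ gets as far as some $u_\ell$ but not $u_{\ell+1}$; the frog it woke at $u_\ell$ then takes over, and so on. The farthest vertex reached partitions the event, and the cut-vertex structure gives $\bbP(\s{u_0}{u_\ell})=\fB^\ell$. Together with independence of distinct frogs, this yields $U_n(r)=r^n+\sum_{\ell=1}^{n-1}(r^\ell-r^{\ell+1})U_{n-\ell}(r)$, i.e.\ $U_n(r)=r^n(2-r)^{n-1}$. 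Each level after the first is crossed with probability $r+(1-r)r=2r-r^2$: either the passing frog continues, or it stops there and the freshly woken one succeeds. The first level has only one frog available. The mean offspring is therefore $[d(m-1)]^nU_n(\fB)=\phi(\fB)^n/(2-\fB)$, which exceeds $1$ for some $n$ exactly when $\phi(\fB)>1$. Different families in the same generation use the frogs at $u_0,\dots,u_{n-1}$, which lie in disjoint parts of the embedded $d(m-1)$-ary tree; this gives an honest Galton--Watson process dominated by the frog model. Your hitting-probability part agrees with the paper: the first-step system, the elimination producing $\fgamma$ and $\fQ B^2+\fR B+\fS=0$, and the root choice as $p\to 0$. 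What must change is the branching mechanism, which has to be $n$ levels deep with $n$ large.
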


% To sum up, phase diagrams for $(m,d) \in \{(2, 2), (2, 20), (10, 2), (10, 20)\}$ are illustrated in Figure~\ref{Fig: PDiag-all}, where we represent respectively in pink and in blue the region of almost sure extinction and the region of survival with positive probability. The region in white is not covered by our results.

To summarize, the phase diagrams for $(m,d) \in {(2,2), (2,20), (10,2), (10,20)}$ are presented in Figure~\ref{Fig: PDiag-all}. The regions corresponding to almost sure extinction and survival with positive probability are depicted in pink and blue, respectively, while the white region represents parameter values for which our results do not provide a conclusion.

\begin{figure}[htbp]
\centering
\begin{subfigure}[b]{0.45\textwidth}
\centering
\includegraphics[width=\linewidth]{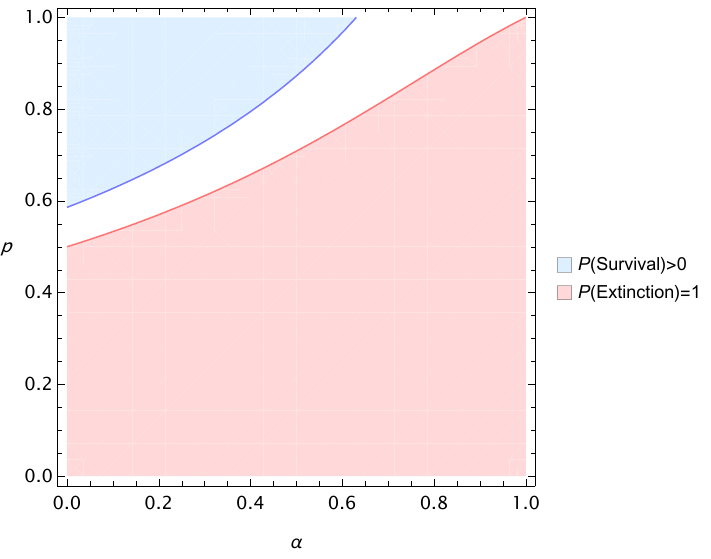}
\caption{$m = 2$ and $d = 2$.}
\label{Fig: PDiag-m2-d2}
\end{subfigure}
\hfill
\begin{subfigure}[b]{0.45\textwidth}
\centering
\includegraphics[width=\linewidth]{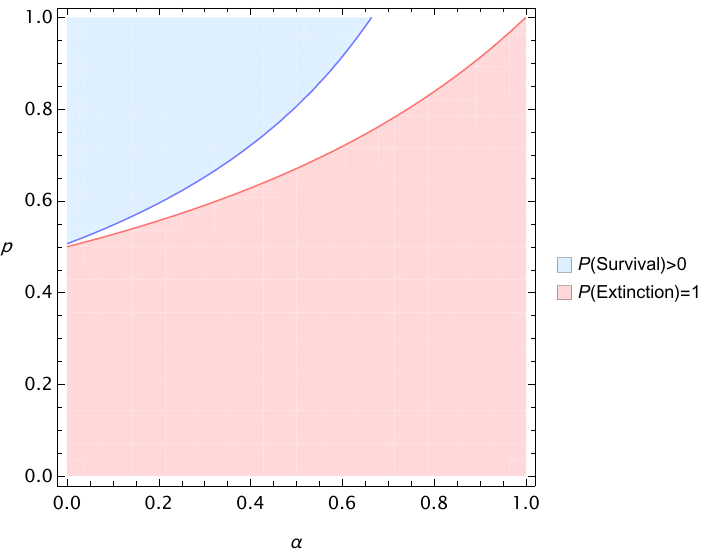}
\caption{$m = 2$ and $d = 20$.}
\label{Fig: PDiag-m2-d20}
\end{subfigure}

\vspace{0.5cm}

\begin{subfigure}[b]{0.45\textwidth}
\centering
\includegraphics[width=\linewidth]{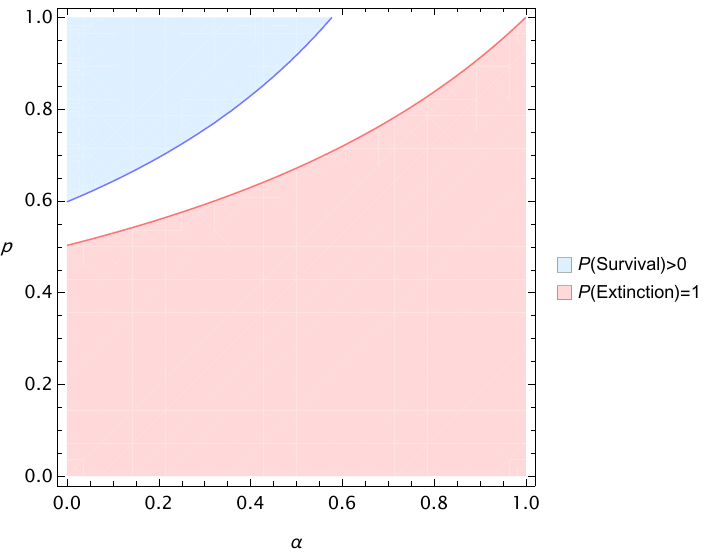}
\caption{$m = 10$ and $d = 2$.}
\label{Fig: PDiag-m10-d2}
\end{subfigure}
\hfill
\begin{subfigure}[b]{0.45\textwidth}
\centering
\includegraphics[width=\linewidth]{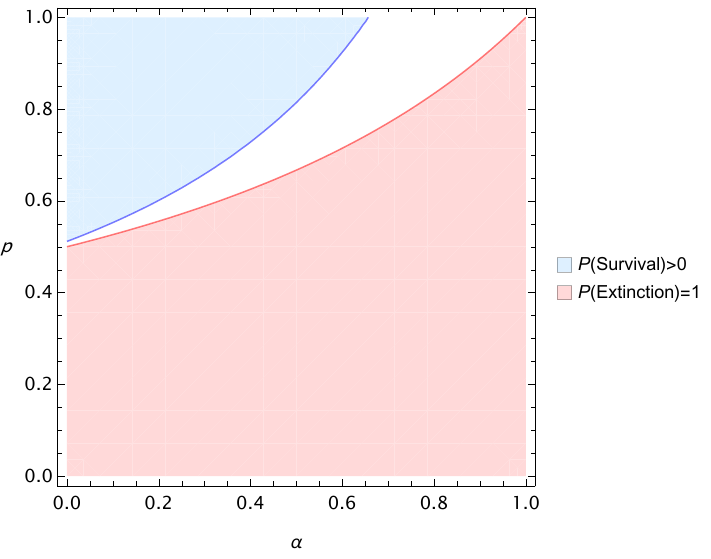}
\caption{$m = 10$ and $d = 20$.}
\label{Fig: PDiag-m10-d20}
\end{subfigure}
\caption{Phase diagrams for the $\FM{\Dmd}{\alpha,p}$.}
\label{Fig: PDiag-all}
\end{figure}

% Versão com 3:
% \begin{figure}[htbp]
%      \centering
%      \begin{subfigure}[b]{0.3\textwidth}
%             \centering
%     \includegraphics[width=1\linewidth]{Fig-Drift-PDiag-m10-d2.pdf}
%     \caption{$m = 10$ and $d = 2$.}
%     \label{Fig: PDiag-m10-d2}
%      \end{subfigure}
%      \hfill
%      \begin{subfigure}[b]{0.3\textwidth}
%          \centering
%     \includegraphics[width=1\linewidth]{Fig-Drift-PDiag-m10-d20.pdf}
%     \caption{$m = 10$ and $d = 20$.}
%     \label{Fig: PDiag-m10-d20}
%      \end{subfigure}
%      \hfill
%      \begin{subfigure}[b]{0.3\textwidth}
%          \centering
%     \includegraphics[width=1\linewidth]{Fig-Drift-PDiag-m2-d20.pdf}
%     \caption{$m = 2$ and $d = 20$.}
%     \label{Fig: PDiag-m2-d20}
%      \end{subfigure}
%      \caption{Phase diagrams for the $\FM{\Dmd}{\alpha,p}$.}
%      \label{Fig: PDiag-all}
% \end{figure}

% \begin{figure}
%     \centering
%     \includegraphics[width=0.5\linewidth]{Fig-Drift-PDiag-m10-d2.pdf}
%     \caption{Phase diagram for $m = 10$ and $d = 2$.}
%     \label{Fig: PDiag-m10-d2}
% \end{figure}

% \begin{figure}
%     \centering
%     \includegraphics[width=0.5\linewidth]{Fig-Drift-PDiag-m10-d20.pdf}
%     \caption{Phase diagram for $m = 10$ and $d = 20$.}
%     \label{Fig: PDiag-m10-d20}
% \end{figure}

% \begin{figure}
%     \centering
%     \includegraphics[width=0.5\linewidth]{Fig-Drift-PDiag-m2-d20.pdf}
%     \caption{Phase diagram for $m = 2$ and $d = 20$.}
%     \label{Fig: PDiag-m2-d20}
% \end{figure}

\subsection*{The symmetric case: $\alpha = 1/\Delta$}

First, note that by a direct application of Proposition 1.2 in \citet{AMP-PT}, we have
\begin{align}\label{LB: AMP(2002)}
    p_c(\Dmd) \geq \frac{(d+1)(m-1)}{2[(d(m-1)+(m-2)] + 1}.
\end{align}

However, by comparison with a Galton--Watson branching process (see Section \ref{SS: BPT}) we can improve the bound in \eqref{LB: AMP(2002)} for the frog model on $\Dmd$, obtaining the next result.

\begin{teo}\label{T: LBBPT} In the symmetric case, 
    for any $m \geq 2$ and $d \geq 2$, 
    \begin{align} \label{eq:3.4}
        p_c(\Dmd) \geq \frac{\sqrt{[2d(m-1)+(m-2)]^2 + 4d(m-1)} - (m-2)}{4d(m-1)}.
    \end{align}
\end{teo}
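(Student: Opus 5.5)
The plan is a domination argument. I will dominate the cluster of $\raiz$ in the oriented long-range percolation of Section 2 by a Galton--Watson process whose mean offspring $\mu(p)$ is explicit. I will then show that $\mu(p)\le 1$ exactly when $p$ is at most the right-hand side of \eqref{eq:3.4}; as usual this gives almost sure extinction below the threshold, and hence the lower bound on $p_c(\Dmd)$.

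\emph{Reduction to a threshold equation.} Set $D=d(m-1)$, so that $\Delta=D+m-1$. Let $p_*$ denote the right-hand side of \eqref{eq:3.4}. It is the positive root of
\[
2D p^2+(m-2)p-\tfrac{\Delta}{2}=0 .
\]
To check this, note that the discriminant of this quadratic is $(m-2)^2+4D\Delta$, which equals $[2D+(m-2)]^2+4D$. So the goal is a branching comparison whose mean offspring is, or is dominated by,
\[
\mu(p)=\frac{4Dp^2+2(m-2)p}{\Delta},
\]
a quantity that is increasing in $p$ and equals $1$ precisely at $p=p_*$.

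\emph{Construction of the dominating process.} I will use the cactus (tree of cliques) structure of $\Dmd$ and the fact that the graph is geodetic. In the exploration, an active frog is allowed to have as offspring only those frogs which it is first to wake. Because a walk can only leave a clique through a cut-vertex, I will classify each newly woken vertex by whether it lies in the frog's own clique (a sibling) or in a child clique. Activations through the parent direction are already charged to an earlier generation, so they may be discarded by an upper-bound coupling. The frogs generated this way form a process stochastically dominated by a Galton--Watson tree. I will argue this with the usual exploration in which each vertex's range $\mathcal R_u$ is used only once, and the ranges of distinct vertices are independent.

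\emph{Computing the mean offspring.} For the symmetric walk I will compute the expected number of new vertices each frog wakes, using first-step analysis:
\begin{itemize}
\item A first jump, which happens with probability $p$, lands on a uniform neighbour. This gives the sibling term $(m-2)p/\Delta$ and the child term $Dp/\Delta$.
\item A second jump, from a child, can wake a grandchild. This gives the term proportional to $Dp^2$.
\item Walks that wander back into already explored territory contribute nothing new.
\end{itemize}
The factors of $2$ in $\mu(p)$ should come from a symmetrization argument. Each edge of the tree of cliques can be crossed in either orientation. Pairing a frog's contribution with that of its parent then lets one charge at most twice the one-sided expectation, while dropping the higher-order contributions, which are controlled by the geometric lifetime.

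\emph{Main obstacle.} The main obstacle is proving that $\mu(p)$ really dominates the mean of the comparison process. This amounts to showing that excursions of length at least $3$ into unexplored regions are compensated by the discarded back-tracking mass. I would first attempt a generating-function bound on the expected number of distinct vertices in $\mathcal R_u$ that are strict successors of $u$ or siblings of $u$. The quantity to compare against is the improvement over \eqref{LB: AMP(2002)}, which Proposition 1.2 of \citet{AMP-PT} already yields from the crude count $E|\mathcal R_u|$. Once this domination is in place, extinction for $p<p_*$ follows from subcriticality of the Galton--Watson process, and hence $p_c(\Dmd)\ge p_*$.
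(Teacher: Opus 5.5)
\textbf{There is a genuine gap: the mean-offspring bound your plan rests on is false.} Your threshold algebra is correct: with $D=d(m-1)$, the right-hand side $p_*$ of \eqref{eq:3.4} is the positive root of $4Dp^2+2(m-2)p=\Delta$. Galton--Watson domination is also the paper's strategy. However, $\mu(p)=(4Dp^2+2(m-2)p)/\Delta$ is reverse-engineered from the answer, and the inequality you intend to prove (mean offspring $\le \mu(p)$) fails for small $p$.

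Your own first-step bullet already contributes $(D+m-2)p/\Delta$. This exceeds $\mu(p)$ whenever $p<(D-m+2)/(4D)$, which is a nonempty range because $D\ge 2(m-1)$. For $m=2$, $\mu(p)=4Dp^2/\Delta$ is below the child term $Dp/\Delta$ for every $p<1/4$. Numerically, for $m=d=2$ and $p=0.1$, the natural offspring mean is about $0.074$, while $\mu(0.1)\approx 0.027$. So the ``symmetrization'' and ``factor of $2$'' step has no content, and the step you call the main obstacle is exactly where the proof would have to be.

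Separately, discarding every activation ``through the parent direction'' is not a valid upper-bound move. A frog at $v$ can pass through $v^-$ into regions not yet explored. What you may discard is the parent vertex $v^-$ alone: it is necessarily already awake, because it is a cut-vertex separating $v$ from $\raiz$.

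\textbf{The missing idea is that the mean can be computed exactly, so no clever bound is needed.} Every vertex on a geodesic is a cut-vertex, so $\bbP(\s{u}{v})=r^n$ when $\dist(u,v)=n$. Here $r=r(p,m,d)$ solves $r=\frac{p}{\Delta}+\frac{p(m-2)}{\Delta}r+\frac{pD}{\Delta}r^2$ (Lemma~\ref{L: Prob Open}). The sphere of radius $n$ around any vertex has $\frac{d+1}{d}D^n$ vertices. Take offspring $X_v=|\cR_v\setminus\{v\}|-\ind\{\s{v}{v^-}\}$; over-counting everything else is harmless for domination. Then
\[
\bbE(X_v)=\frac{(d+1)(m-1)r}{1-Dr}-r=\frac{r[D(r+1)+m-2]}{1-Dr},
\]
so $\bbE(X_v)<1$ if and only if $Dr^2+(2D+m-2)r-1<0$. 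Now invert $p=\Delta r/(Dr^2+(m-2)r+1)$ (Lemma~\ref{L: Equiv}). At the critical $r$ this reduces to $p=\Delta r/(2(1-Dr))$, which gives exactly $p_*$.

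This also explains why your guess produced the right threshold. Both $\bbE(X_v)$ and $\mu(p)$ are increasing in $p$ and both equal $1$ precisely at $p_*$, so they share the level set $\{=1\}$. Nevertheless, $\mu(p)$ is not the mean of any comparison process you have described, and it does not dominate $\bbE(X_v)$.
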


On the other hand, the following result gives rigorously determined upper and lower bounds for the critical probability of $\FM{\Dmd}{p}$.

% \begin{defn}\label{Def: Bounds}
% For any $m \geq 2$ and $d \geq 2$, we define

% \[\underaccent{\bar}{p}(m,d) =\frac{(d+1)(m-1)\underaccent{\bar}{r}}{d(m-1)\underaccent{\bar}{r}^2 + (m-2)\underaccent{\bar}{r} + 1},
% \]
% where $\underaccent{\bar}{r}$ is the unique root in $[0,1/d(m-1)]$ of the polynomial
% \begin{align}
% d(m-1)r^2 + [2d(m-1) + (m-2)]r - 1 = 0, 
% \end{align}
% and
% \[
% \bar{p}(m,d) = \frac{(d+1)(m-1)\bar{r}}{d(m-1)\bar{r}^2 + (m-2)\bar{r} + 1},
% \]
% where $\bar{r}$ is the unique root in $[0,1/d(m-1)]$ of the polynomial 
% \[
% d(m - 1)r^2 - 2d(m - 1)r + 1 = 0.
% \]
% \end{defn}

% \begin{align}
% \underaccent{\bar}{p}(m,d) &= \frac{\sqrt{[2d(m-1)+(m-2)]^2 + 4d(m-1)} - (m-2)}{4d(m-1)}\\
% \bar{p}(m,d) &= \frac{(d+1)(m-1)}{2d(m-1) + (m-2)}
% \end{align}

% \vermelho{limitante inferior obtido com o TTBP mais básico}
% \begin{teo} \label{T: LB-TTBP-Basico}
%     For any $m \geq 2$ and $d \geq 2$,
% \[ \underaccent{\bar}{p}(m,d) \leq p_c(\Dmd) \leq \bar{p}(m,d), \]
% where
% \[ \underaccent{\bar}{p}(m,d) = \left\{\begin{array}{cl}
% \frac{2(d+1)}{\sqrt{4d^2+4d-3} + 2d + 1} &\text{if } m = 2,\\[0.2cm]%
% (3.2) &\text{if } m \geq 3,
% \end{array} \right. \]
% \end{teo}

%\vermelho{limitante inferior obtido com o TTBP mais geral (melhora o anterior)}
\begin{teo}
\label{T: bounds}
In the symmetric case, for any $m \geq 2$ and $d \geq 2$,
\[ \underaccent{\bar}{p}(m,d) \leq p_c(\Dmd) \leq \bar{p}(m,d), \]
where
\[ \underaccent{\bar}{p}(m,d) = \frac{2(d+1)(m-1)}{2d(m-1)+2m-3+\sqrt{\left(2d(m-1)+2m-3\right)^2-4(2m-3)}}\]
%\left\{\begin{array}{cl}
%\frac{2(d+1)}{\sqrt{4d^2+4d-3}+2d+1} &\text{if } m = 2,\\[0.3cm]%
%\frac{\sqrt{[2d(m-1)+(m-2)]^2 + 4d(m-1)} - (m-2)}{4d(m-1)} &\text{if } m \geq 3,
%\end{array} \right. 
%\]
and
\[ \bar{p}(m,d) = \frac{(d+1)(m-1)}{2d(m-1) + (m-2)}. \]
\end{teo}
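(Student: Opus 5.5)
Both bounds will come from specializing Theorems~\ref{T: Extinction} and~\ref{T: Survival} to $\alpha = 1/\Delta$, with $\Delta = (d+1)(m-1)$, followed by algebraic simplification. No new probabilistic argument should be needed; for the upper bound I also keep a direct comparison in reserve.

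\emph{Lower bound.} By Theorem~\ref{T: Extinction}, the model dies out whenever $\psi(1/\Delta,p)<1$. This is equivalent to
\[ p < \frac{2}{\,2-\alpha+\sqrt{D}\,}, \qquad \text{where } D \text{ is the discriminant in } \eqref{F: LE}. \]
Multiplying numerator and denominator by $\Delta$ gives $2-\alpha \mapsto 2\Delta-1 = 2d(m-1)+2m-3$. The key simplification is $\Delta^2 D = (2\Delta-1)^2 - 4(2m-3)$, which uses $1-\alpha = (\Delta-1)/\Delta$ and $\Delta-1 = d(m-1)+m-2$. The second term of $\Delta^2 D$ becomes
\[ \frac{4\big[(m-2)(2\Delta-1)+d(m-1)\big]}{\Delta-1}. \]
Expanding, $(m-2)(2\Delta-1)+d(m-1) = (2m-3)\big(d(m-1)+m-2\big) = (2m-3)(\Delta-1)$, so this term equals $4(2m-3)$. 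Hence extinction holds for every $p<\underaccent{\bar}{p}(m,d)$, and therefore $p_c(\Dmd)\ge \underaccent{\bar}{p}(m,d)$. As a side check, $\underaccent{\bar}{p}(m,d)$ should dominate the bound in Theorem~\ref{T: LBBPT}, though the statement does not require this.

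\emph{Upper bound.} By Theorem~\ref{T: Survival}, survival holds when $\phi(B)>1$. Since $\phi(r) = d(m-1)r(2-r)$ is increasing on $[0,1]$, this is equivalent to
\[ B(1/\Delta,p) > r^* := 1-\sqrt{1-\tfrac{1}{d(m-1)}}. \]
The plan has three steps:
\begin{enumerate}
\item Substitute $\alpha = 1/\Delta$ and $\beta = 1/\Delta$ into $A$, $\gamma$, $\mathscr{Q}$, $\mathscr{R}$, $\mathscr{S}$.
\item Show that $p\mapsto B(1/\Delta,p)$ is increasing. This should follow by interpreting $B$ as a (generating-function type) probability of reaching a given child, which is monotone in $p$ by coupling.
\item Show that $\phi(B(1/\Delta,p))>1$ for all $p>\bar{p}(m,d)$. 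It is enough to verify this at $p$ slightly above $\bar p$ via the monotonicity from step 2, or to show that the threshold value of $p$ is at most $\bar p$.
\end{enumerate}

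\emph{Fallback for the upper bound.} If the algebra for $B$ proves too messy to pin down exactly, I would compare directly with a Galton--Watson process. Keep only the activations in which a frog's \emph{first} step goes to a child, or goes first to a sibling and then to one of that sibling's children. Each step survives with probability $p$ and hits a specified neighbour with probability $1/\Delta$, so the mean offspring of this embedded process is explicit. Its supercriticality condition should be exactly $p(2d(m-1)+m-2)>\Delta$, i.e.\ $p>\bar p$: the frog reaches each of its own $d(m-1)$ children directly, each of the $d(m-1)$ children of its parent clique through one extra sibling step, and the $m-2$ siblings themselves.

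\emph{Main obstacle.} I expect the main obstacle to be independence. Activations of distinct children must be made independent, or stochastically dominated from below, across generations; the overlaps between sibling frogs' ranges inside a clique need care. I would handle this by assigning to each new vertex a single designated activating frog, namely the first frog to reach it along the chosen route.
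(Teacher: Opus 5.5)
Your lower bound is correct and is the paper's own argument. The Perron root of the STTBP mean matrix $M$ is exactly $\psi(1/\Delta,p)$, and your identity $(m-2)(2\Delta-1)+d(m-1)=(2m-3)(\Delta-1)$ is the simplification needed.

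\textbf{The upper bound is only asserted.} Step~3 carries all the work, and you leave it undone. Checking $\phi(B)>1$ at one $p$ slightly above $\bar p(m,d)$ only gives $p_c(\Dmd)$ at most that $p$. You need $B(1/\Delta,\bar p(m,d))\ge r^*$ together with monotonicity. The missing observation is that $B(1/\Delta,p)=\fr$:
\begin{enumerate}
\item With $\alpha=\beta=1/\Delta$, the triple $A=B=C=\fr$ solves \eqref{F: Eq-FGP-Drift}, since each equation reduces to \eqref{F: Eq-FGP}.
\item Since $\mathscr{S}/\mathscr{Q}=\beta/\alpha=1$, the other root of the quadratic for $B$ is $1/\fr>1$, and the formula in \eqref{F: function B} picks the smaller root.
\item From $r^*(2-r^*)=1/(d(m-1))$ you get $r^*<1/(d(m-1))$ and $d(m-1)(r^*)^2+(m-2)r^*+1=[2d(m-1)+m-2]\,r^*$.
\item Lemma~\ref{L: Equiv} then gives $\fr=r^*$ exactly at $p=\bar p(m,d)$. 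Monotonicity of $r$ in $p$ yields $\phi(B)>1$ for every $p>\bar p(m,d)$.
\end{enumerate}
This is precisely the paper's computation, with $r^*=r_\infty$ and $\bar p(m,d)=\lim_n \bar p_n(m,d)$. Your step~2 becomes unnecessary, since Lemma~\ref{L: Equiv} already gives monotonicity.

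\textbf{Discard the fallback; its key claim is false.} A frog's first step lands on a single vertex. The embedded process you describe therefore has mean offspring at most
\[
p\,\frac{d(m-1)+m-2}{\Delta}+p^2\,\frac{(m-2)\,d(m-1)}{\Delta^2},
\]
not $p\,[2d(m-1)+m-2]/\Delta$. For $m=2$ this equals $p\,d/(d+1)<1$, so the process is subcritical for every $p$. Counting siblings as offspring also breaks the tree structure you would need for independence. Siblings of $v$ are children of $v^-$, not descendants of $v$, so family trees overlap.

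Even the paper's finite-depth processes $Z^{(n)}$ do not reach $\bar p(m,d)$. Since $(2-r)^{(n-1)/n}<2-r$, one has $r_n>r_\infty$, hence $\bar p_n(m,d)>\bar p(m,d)$ for each $n$, with $\bar p_1(m,d)=1$. The threshold $\bar p(m,d)$ appears only in the limit $n\to\infty$.
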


\begin{obss} Note that
\begin{itemize}
    \item[(i)] When $m=2$,  we recover the lower and the upper bounds given in \citet{GP} (see Theorem 1 therein) and \citet{LMP-IUB} (see Theorem 4.1 therein), respectively. 
    \item[(ii)] $\underaccent{\bar}{p}(m,d)$ is better than the lower bound in \eqref{eq:3.4}.
    \item[(iii)] Theorem \ref{T: bounds} follows as a corollary of Theorems \ref{T: Extinction} and \ref{T: Survival} by setting $\alpha = 1/\Delta$.
\end{itemize}
\end{obss}

To establish our main results, we couple the frog model with suitably defined branching processes, specifically, a supercritical to prove survival and subcritical one to establish extinction.

For clarity of exposition, we split the analysis into two parts: Section~\ref{S: Proofs Symmetric} addresses the symmetric case ($\alpha = 1/\Delta$), while the general case is treated in Section~\ref{S: Proofs General}.

%For any $m \geq 2$ and $d \geq 2$,
%\[
% \frac{\sqrt{[2d(m-1)+(m-2)]^2 + 4d(m-1)} - (m-2)}{4d(m-1)} \leq p_c(\Dmd) \leq \frac{(d+1)(m-1)}{2d(m-1) + (m-2)}.
%\underaccent{\bar}{p}(m,d) \leq p_c(\Dmd) \leq \bar{p}(m,d), 
%\] 
%where $\underaccent{\bar}{p}(m,d)$ and $\bar{p}(m,d)$ are given in Definition \ref{Def: Bounds}.

%\begin{teo}
%\label{T: UB}
%For any $m \geq 2$ and $d \geq 2$,
%\[
%p_c(\Dmd) \leq \bar{p}(m,d).
%\]
%\end{teo}

% \begin{cor} For any $d \geq 2$,
% \[ 
% GP(2023)  \leq p_c(\Td) \leq \frac{d+1}{2d}, \]
% %\begin{align}
% % \frac{1}{2}\sqrt{1+ \frac{1}{d}} \leq p_c(\Td) \leq \frac{d+1}{2d}.
% %\end{align}
% \end{cor}

% \begin{proof}
% Taking $m=2$ in Theorem \ref{T: bounds} and after some elementary calculations, the result follows. 
% \end{proof}
% \vermelho{Asymptotics for $p_c$:}
% \begin{cor} 
% \[
% \frac{1}{2}\leq \lim_{m \to \infty} p_c(\Dmd) \leq \frac{d+1}{2d}
% \]

% \[
% \lim_{d \to \infty}\lim_{m \to \infty} p_c(\Dmd) = \frac{1}{2}
% \]
% \end{cor}

\section{Proofs: Symmetric random walks}
\label{S: Proofs Symmetric}

Although Theorems~\ref{T: LBBPT} and \ref{T: bounds} follow directly from Theorems~\ref{T: Extinction} and \ref{T: Survival} by setting $\alpha = 1/\Delta$, we begin by considering the frog model on $\Dmd$ with $\alpha = 1/\Delta$ for the sake of clarity and to introduce the main ideas.  We first present several preliminary results that will be needed later. In particular, we require some facts concerning the hitting probabilities of random walks on $\Dmd$.
% \section{Proofs}
% In this section, we prove the main results of the paper. To show survival and extinction, we compare the frog model with supercritical and subcritical branching processes, respectively.

\subsection{Preliminaries}

% To ease the understanding, we split the analysis into two parts: I. $\alpha = 1/\Delta$, and II. $(\alpha,p) \in [0,1]^2$.

% \subsection*{I. Symmetric random walks}

The following result provides a formula for the probability that a frog starting at a vertex $u$ visits $v$ in the event that it is activated (equivalently, the probability that the oriented edge from $u$ to $v$ is open).

\begin{lem}
\label{L: Prob Open}
Let $u,v \in \Dmd$ with \(\dist(u, v) = n \geq 1\).
Then,
\[ \bbP(\s{u}{v}) = [\fr]^n, \]
where 
\begin{equation}
\label{E: Prob Open Symmetric}
\fr = \frac{\Delta-p(m-2) - \sqrt{[\Delta-(m-2)p]^2 - 4(m-1)dp^2}}{2(m-1)dp}.
\end{equation}

\end{lem}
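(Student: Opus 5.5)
The plan rests on the tree-of-cliques structure of $\Dmd$. Every vertex on the geodesic between $u$ and $v$ is a cut-vertex, and the walk is symmetric, so the graph looks the same from every vertex. Fix a neighbor $w$ of $u$ and let $r$ be the probability that a frog started at $u$, killed at each step with probability $1-p$, ever hits $w$.

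\textbf{Step 1: reduction to a single step.} Write the geodesic as $u=u_0,u_1,\ldots,u_n=v$. Because it is unique and $\Dmd$ is a tree of cliques, each edge $u_{j}u_{j+1}$ lies in its own clique. Any path from $u_j$ to $v$ must pass through $u_{j+1}$: this is clear when $u_{j+1}$ is a cut vertex separating $u_j$ from $v$, and the case where $u_{j+1}$ and $v$ lie in the same clique is handled the same way. By the strong Markov property and the memoryless geometric lifetime, hitting $v$ factors as $n$ successive independent hits of the next geodesic vertex. Vertex-transitivity of the Cayley graph together with symmetry of the walk shows each factor equals $r$, giving $\bbP(\s{u}{v})=r^n$.

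\textbf{Step 2: a quadratic for $r$.} Condition on the first step. With probability $1-p$ the frog dies. Otherwise it moves uniformly to one of $\Delta$ neighbors:
\begin{itemize}
\item to $w$ itself, with probability $1/\Delta$;
\item to one of the $m-2$ other vertices of the clique containing $u$ and $w$, each of which is adjacent to $w$, so the frog must then travel distance $1$;
\item to one of the $d(m-1)$ vertices in other cliques at $u$, from which the frog is at distance $2$ from $w$ and must pass back through $u$.
\end{itemize}
By Step 1, the last case contributes $r^2$. This gives
\[
r = \frac{p}{\Delta}\bigl(1 + (m-2)r + d(m-1)r^2\bigr),
\]
that is,
\[
(m-1)dp\,r^2 - (\Delta - (m-2)p)\,r + p = 0.
\]
Its roots are
\[
r = \frac{\Delta - p(m-2) \pm \sqrt{[\Delta-(m-2)p]^2 - 4(m-1)dp^2}}{2(m-1)dp}.
\]

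\textbf{Step 3: choosing the root (the main obstacle).} To show the minus sign is correct, I would use the standard minimal-solution argument. Let $r_k$ be the probability of hitting $w$ within $k$ steps, with $r_0=0$. Then $r_{k+1}=f(r_k)$, where $f(x)=\frac{p}{\Delta}(1+(m-2)x+d(m-1)x^2)$ is increasing on $[0,\infty)$. Hence $r_k$ increases to the smallest nonnegative fixed point of $f$, which is the minus-sign root. Realness of that root is automatic, since it exists as a limit; one can also check that the discriminant is nonnegative for $p\le1$. Identifying the limit of $r_k$ with $r$ needs a little care with the recursion: one conditions on the first step and bounds the two-stage passage through $u$, so that $r_{k+1}\le f(r_k)$ and $r\le$ the minimal fixed point, while the reverse inequality is clear. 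For $p<1$ there is an easier alternative: the plus root exceeds $1$, or uniqueness can be argued by a contraction.
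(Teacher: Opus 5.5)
Your proposal is correct and follows essentially the same route as the paper. Both arguments factor $\tau_{uv}$ into $n$ independent copies of a one-step hitting time using the cut vertices on the geodesic, then derive the quadratic $(m-1)dp\,r^2-(\Delta-(m-2)p)r+p=0$ by first-step analysis. The only difference is how you select the root. The paper appeals to $\lim_{p\to0^+}\bbE(p^{\tau_{uu'}})=0$ and handles $p=1$ separately by letting $p\to1^-$. You instead note that the quadratic equals $\Delta(p-1)<0$ at $r=1$, so the plus root exceeds $1$, and you add a minimal-fixed-point argument via $r_{k+1}\le f(r_k)$. That relation is correctly an inequality, not the equality you first wrote. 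Your version is more self-contained and also covers $p=1$ directly.
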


\begin{proof}
Let \(u\) and \(v\) vertices in $\Dmd$ such that $\dist(u,v) = n$ and \(\tau_{uv}\) be the first time when a SSRW on \(\Dmd\) starting from \(u\) visits \(v\). Conditioning on the lifetime of the frog at $u$, we have for \(p < 1\), 
\begin{equation*}
\bbP(\s{u}{v}) = \sum_{\ell = n}^{\infty} \bbP(\s{u}{v} \dado \tau_{uv} = \ell) \, \bbP(\tau_{uv} = \ell) 
= \bbE(p^{\tau_{uv}}).
\end{equation*}

Now, we denote the unique geodesic from $u$ to $v$ by $u=u_0, u_1, \ldots, u_n =v$. Since each $u_j$ ($j=0,\ldots,n$) is a cut-vertex we note that $\tau_{uv}$ is the sum of $n$ i.i.d.random variables, each with the distribution of $\tau_{u_0u_1}$. Hence, $\bbP(\s{u}{v}) = [\bbE(p^{\tau_{uu'}})]^n$, where $u\sim u'$. But by conditioning on the first step of the random walk, we obtain
\begin{align}
\label{F: Eq-FGP}
\bbE(p^{\tau_{uu'}}) = \frac{p}{\Delta} + p\left(\frac{m-2}{\Delta}\right)\bbE(p^{\tau_{uu'}}) + p\left( \frac{d(m-1)}{\Delta}\right)[\bbE(p^{\tau_{uu'}})]^2.
\end{align}
Solving equation \eqref{F: Eq-FGP} and using the fact that the probability generating function of $\tau_{uu'}$ satisfies $\lim_{p \to 0^+} \bbE(p^{\tau_{uu'}})= 0$, we get
\[
\bbE(p^{\tau_{uu'}}) = r(p,m,d).
\]
This finishes the proof for $p<1$. To conclude, for $p=1$, we have
\[ \bbP(\s{u}{v}) = \bbP(\tau_{uv}<\infty) = \lim_{p\to 1^-} \bbE(p^{\tau_{uv}}) =\left[\frac{1}{d(m-1)}\right]^n. \qedhere\]
\end{proof}

In the sequel, we will make use of the following result that establishes an equivalence between the function $\fr$ given in \eqref{E: Prob Open Symmetric} and $p$. 
\begin{lem}
\label{L: Equiv}
For each fixed $m \geq 2$ and $d \geq 2$, the function $r(p,d,m)$ is increasing in $p$. Moreover, for $\nu \in \left[0,\frac{1}{d(m-1)}\right]$, 
\[
r(p,d,m) = \nu \iff p = \frac{(d+1)(m-1)\nu}{d(m-1)\nu^2 + (m-2)\nu + 1}.
\] 
\end{lem}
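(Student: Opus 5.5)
We work directly from the quadratic relation behind Lemma~\ref{L: Prob Open}. Multiplying \eqref{F: Eq-FGP} by $\Delta$ and writing $r=r(p,d,m)$, we see that $r$ is a root of
\[
F(r,p) := d(m-1)p\,r^2 + \bigl[(m-2)p - \Delta\bigr] r + p = 0 .
\]
This relation is linear in $p$. Solving for $p$ gives
\[
p\,\bigl[d(m-1)r^2 + (m-2)r + 1\bigr] = \Delta r = (d+1)(m-1) r,
\]
so every value $r=r(p,d,m)$ satisfies $p = g(r) := \frac{(d+1)(m-1)r}{d(m-1)r^2+(m-2)r+1}$. The denominator is positive for $r\ge 0$, so no division issue arises.

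The plan is to show that $p\mapsto r(p,d,m)$ is a continuous bijection from $[0,1]$ onto $[0,\frac{1}{d(m-1)}]$, with $g$ as its inverse on that interval. Three steps are needed.

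\begin{enumerate}
\item \emph{Endpoints.} At $p\to0^+$ we have $r\to0$, as already used in the proof of Lemma~\ref{L: Prob Open}. At $p=1$, $r=\frac{1}{d(m-1)}$, also from that proof. One can check this directly: $g\bigl(\tfrac{1}{d(m-1)}\bigr)=1$, since the denominator becomes $\frac{1}{d(m-1)}+\frac{m-2}{d(m-1)}+1=\frac{(d+1)(m-1)}{d(m-1)}$.

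\item \emph{Monotonicity of $g$.} Differentiate $g$:
\[
g'(r) \propto d(m-1)r^2+(m-2)r+1 - r\bigl(2d(m-1)r+(m-2)\bigr) = 1 - d(m-1)r^2 .
\]
This is strictly positive for $0\le r<\frac{1}{\sqrt{d(m-1)}}$, and in particular on all of $[0,\frac{1}{d(m-1)}]$. Hence $g$ is a strictly increasing continuous bijection from $[0,\frac{1}{d(m-1)}]$ onto $[0,1]$.

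\item \emph{Range of $r$.} Since $r(\cdot,d,m)$ is continuous in $p$ (the formula \eqref{E: Prob Open Symmetric} has a nonnegative discriminant on $[0,1]$) and satisfies $g(r(p))=p$, it suffices to show that $r(p)$ stays in $[0,\frac{1}{d(m-1)}]$. Then $r=g^{-1}$ on $[0,1]$, which is increasing and gives the stated equivalence.
\end{enumerate}

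The main obstacle is ruling out that $r(p)$ is the other preimage of $p$ under $g$. That preimage lies in $(\frac{1}{\sqrt{d(m-1)}},\infty)$, where $g$ decreases. The formula selects the smaller root of $F(\cdot,p)$. The product of the two roots is $\frac{1}{d(m-1)}$, so the smaller root is at most $\frac{1}{\sqrt{d(m-1)}}$. It therefore lies on the increasing branch of $g$, and since $g(r)=p\le1=g\bigl(\tfrac{1}{d(m-1)}\bigr)$, monotonicity forces $r\le\frac{1}{d(m-1)}$.

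Alternatively, $r=\bbE(p^{\tau})\le\bbP(\tau<\infty)=\frac{1}{d(m-1)}$ gives this bound probabilistically, and monotonicity of $r$ in $p$ is immediate from $r$ being a probability generating function with $\bbP(\tau=1)>0$.

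Finally, a remark on notation: the statement writes $r(p,d,m)$ while the lemma uses $r(p,m,d)$ (via \verb|\fr|). These denote the same function, and this should be noted.
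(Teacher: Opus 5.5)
Your proof is correct. The paper states this lemma without proof, so there is no argument of theirs to compare against. The evident intended justification is the one you give: \eqref{F: Eq-FGP} is linear in $p$, so it inverts to $p=g(r)$. Your extra step is exactly what makes that inversion rigorous. You identify the root chosen by \eqref{E: Prob Open Symmetric} as the one on the increasing branch of $g$: it is the smaller root, the product of the roots is $\frac{1}{d(m-1)}$, hence $r\le \frac{1}{\sqrt{d(m-1)}}$, and then $g(r)=p\le 1=g\bigl(\frac{1}{d(m-1)}\bigr)$ gives $r\le\frac{1}{d(m-1)}$.

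Three small points to tidy up:
\begin{enumerate}
\item You should say explicitly why $r\ge 0$. For $p>0$ both roots of $F(\cdot,p)$ are positive, since their sum and product are positive. Alternatively, use the rationalized form
\[
r=\frac{2p}{\Delta-(m-2)p+\sqrt{[\Delta-(m-2)p]^2-4d(m-1)p^2}},
\]
which also gives $r(0)=0$ directly, with no appeal to the generating-function limit.
\item Nonnegativity of the discriminant on $[0,1]$, which you assert without proof, reduces to $d(m-1)+1\ge 2\sqrt{d(m-1)}$.
\item Continuity of $r$ is not actually needed for the inversion.
\end{enumerate}

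Your probabilistic alternative is fine if $\tau$ is taken to be the hitting time of a neighbor for the genuine simple random walk, where \eqref{F: Eq-FGP} holds by transitivity. The analytic route is preferable, though: it is self-contained, and the lemma is really a statement about the explicit function \eqref{E: Prob Open Symmetric}.
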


\subsection*{Extinction} \label{SS: Extinction} In the following sections, we prove Theorem~\ref{T: LBBPT} and establish the lower bound in Theorem~\ref{T: bounds} by dominating the frog model with suitable subcritical branching processes. This approach yields sufficient conditions for the frog model on $\Dmd$ to die out almost surely when $p$ is sufficiently small.

\subsection{Frog model dominated by a simple branching process}\label{SS: BPT}
Define $X_{\raiz} = |\cR_{\raiz}|$, and for a vertex $v \neq \raiz$, let $v^-$ be the unique neighbor vertex of $v$ on the geodesic to the root. Consider the random variable
\begin{align*}
X_v = |\cR_v|- \ind\{\s{v}{v^-}\}.
\end{align*}

First, it is straightforward to show that for all $p < 1$ we have $\bbE(X_{\raiz}) < \infty$ and therefore $X_\raiz$ is finite almost surely.   
Now, consider a Galton--Watson branching process whose initial generation is distributed as $X_{\raiz} = |\cR_{\raiz}|$ and its family size is defined for $X_v$. Observing that an active frog placed at $v$ does not hit an inactive particle in its first step if it moves toward its parent vertex $v^-$, one gets that the frog model on $\Dmd$ (seen as a percolation model) is dominated by the Galton--Watson process just defined.

\begin{proof}[Proof of Theorem \ref{T: LBBPT}]
Let $m \geq 2$ and $d \geq 2$. 
By definition of the branching process described in Section~\ref{SS: BPT}, we have that the $\FM{\Dmd}{p}$ dies out almost surely if 
\begin{equation*}
\begin{aligned}
\bbE(X_v) &= \bbE(X_\raiz) - \bbP(\s{v}{v^-})\\
&=\left(\frac{d+1}{d}\right)\sum_{n\geq 1} [d(m-1)r]^n - r\\
&=\frac{r[d(m-1)(r+1) + (m-2)]}{1-d(m-1)r} < 1,
\end{aligned}
\end{equation*}
where $r = \fr$ is as in Lemma \ref{L: Prob Open}. It follows from Lemma \ref{L: Equiv}  and simple computations that if
\begin{equation*}
    p < \frac{\sqrt{[2d(m-1)+(m-2)]^2 + 4d(m-1)} - (m-2)}{4d(m-1)},
\end{equation*}
then the Galton--Watson process defined above is subcritical, therefore the $\FM{\Dmd}{p}$ dies out almost surely. 
\end{proof}

\subsection{Modified frog model}
\label{SS: FMDTTBP} 
Here are the main ideas for the proofs of the lower bound stated in Theorem~\ref{T: bounds} and also for Theorem~\ref{T: Extinction}, as we will see later. 
Instead of letting all active frogs move simultaneously, we consider a one-frog-at-a-time dynamics, in which at each time step only one active frog is selected and either dies or makes one move.
This new version of the frog model has the same extinction probability as the original model. 
After a random time, the set \(\cvis_t\) of visited vertices at time~\(t\) contains exactly $\Delta + 2$ vertices.
From that moment on, we classify active frogs, depending of its location, as either Type~\(1\) (if it is located at a vertex in which only his parent vertex was already visited) or Type~\(2\) (if it is located at a vertex with at least two already visited sites), we denote this model by $\MFM{\Dmd}{p}$ (recall that we assume $\alpha = 1/\Delta$ and therefore we omit the dependence on $\alpha$).
Then, we construct a two-type branching process that dominates the frog population at all times. The proof is completed by showing that the two-type branching process is subcritical. For the frog model in \(\Td\) with death, but no drift (i.e.\ \(\alpha = 1 / (d + 1)\)), \citet{GP} use this approach to derive a lower bound for the critical parameter \(p_c(\Td)\) of survival.

% In what follows, we keeping the notation of \citet{GP}, whenever possible. Let $K = \inf\{t \geq 0: |\cvis_t| = \Delta + 2\}$ and fix $t \geq K$ and $\cvis_t=T$ and denote $p_v(j,k|T)$ the probability that the frog located at site $v$ inside $T$ (which has been chosen to acts)  produces $j$ particles of type 1, and $k$ particles of type 2. For every $v$, $p_v$ takes values on $\Omega = \{(0,0), (1,0),(0,1),(2,0)\}$.
% The location of $v$ inside $T$ determines the type of the frog:
% \begin{itemize}
%     \item [(i)] If the frog has type 1. Then, independently of $v$ and $T$
%     \begin{equation}
%         p_v(j,k|T) = p_1(j,k) \quad \text{for all} \quad (k,j) \in \Omega;
%     \end{equation}
%     \item[(ii)] If the frog has type 2. Then, there exists integers $a,b$, with $a \geq 1$ and $a+b \geq 2$, such that
%     \begin{equation}
%     \begin{aligned}
%          p_v(0,0|T) &= 1-p, \\
%          p_v(0,1|T) &= \frac{pa}{\Delta},\\
%          p_v(1,0|T) &= \frac{pb}{\Delta}, \\
%          p_v(2,0|T) &= p\left[\frac{\Delta - (a+b)}{\Delta}\right].
%          \end{aligned}
%     \end{equation}
% \end{itemize}

\subsection{$\MFM{\Dmd}{p}$ dominated by a two-type branching process} 

Let us define a two-type branching process, which we refer to as the Symmetric Two-Type Branching Process (STTBP) to emphasize that we are working within the symmetric framework. For $i=1,2$, we denote by $p_i(j,k)$ the probability that a type $i$ particle produces $j$ particles of type 1, and $k$ particles of type 2. 

Formally, for $m \geq 2$ and $d \geq 2$, let $\Delta = (d+1)(m-1)$ and define
\begin{equation*}
   \begin{aligned}
p_1(0,0) &= 1-p,\\
p_1(1,0) &= 0,\\ 
p_1(2,0) &= p\left(\frac{d(m-1)}{\Delta}\right),\\ 
p_1(0,1) &= \frac{p}{\Delta},\\
p_1(0,2) &= p\left(\frac{m-2}{\Delta}\right),\\
\end{aligned} \quad
\begin{aligned}
    p_2(0,0) &= 1-p,\\
    p_2(1,0) &= \frac{p}{\Delta},\\
    p_2(2,0) &= p\left(\frac{d(m-1)-1}{\Delta}\right),\\
    p_2(0,1) &= \frac{p}{\Delta},\\ 
    p_2(0,2) &= p\left(\frac{m-2}{\Delta}\right).
\end{aligned}
\end{equation*}

Now, let $M$ be the first moment matrix of the STTBP. Since the number of types is finite, it is well known (see \citealp[Chapter~V]{AN-BP}) that the multi-type Galton--Watson process dies out almost surely if, and only if, the largest eigenvalue of $M$ is less than or equal to 1. In our case, the matrix $M$ is given by
\[
M = \begin{bmatrix}
    \frac{2d(m-1)p}{\Delta} & \frac{p(2m-3)}{\Delta}\\
    \frac{(2d(m-1)-1)p}{\Delta} & \frac{p(2m-3)}{\Delta}
\end{bmatrix},
\]
and its largest eigenvalue is 
\[
\psi(M) = \frac{\left(2d(m-1)+2m-3+\sqrt{\left(2d(m-1)+2m-3\right)^2-4(2m-3)}\right)p}{2(d+1)(m-1)}.
\]
Consequently, if
\begin{equation*}
   p < \frac{2(d+1)(m-1)}{2d(m-1)+2m-3+\sqrt{\left(2d(m-1)+2m-3\right)^2-4(2m-3)}},
   %\frac{2 (d+1) (m-1)}{2 (d+1) (m-1)+\sqrt{4 (d+1)^2 (m-1)^2-4 (d+1)(m-1)-3}-1},
\end{equation*}
then the STTBP becomes extinct almost surely.
% $\bbP(\FM{\Dmd}{p} \text{ survives}) = 0$. 
Hence, the lower bound in Theorem~\ref{T: bounds} is established once we prove that the STTBP dominates the modified frog model.
%This completes the proof of the lower bound in Theorem \ref{T: bounds}.
% \begin{equation}\label{E: LB2TBP}
%     p_c(\Dmd) \geq \frac{2(d+1)(m-1)}{2d(m-1)+2m-3+\sqrt{\left(2d(m-1)+2m-3\right)^2-4(2m-3)}}
%     %\frac{2 (d+1) (m-1)}{2 (d+1) (m-1)+\sqrt{4 (d+1)^2 (m-1)^2-4 (d+1)(m-1)-3}-1}.
% \end{equation}

\subsection*{Coupling $\FM{\Dmd}{p}$ with a two-type branching process} 

In what follows, we keeping the notation of \citet{GP}, whenever possible. Let $K = \inf\{t \geq 0: |\cvis_t| = \Delta + 2\}$ and fix $t \geq K$ and $\cvis_t=T$ and denote $p_v(j,k|T)$ the probability that the frog located at site $v$ inside $T$ (which has been chosen to acts)  produces $j$ particles of type 1, and $k$ particles of type 2. For every $v$, $p_v$ takes values on $\Omega = \{(0,0), (1,0),(0,1),(2,0), (2,0)\}$.
The location of $v$ inside $T$ determines the type of the frog:
% Let $t \geq K =\inf\{t \geq 0: |\cvis_t| = \Delta + 2\}$ fixed. For a frog located at $v$ inside $T$, we have the following cases
\begin{itemize}
    \item [(i)] If the frog has type 1. Then, independently of $v$ and $T$,
    \begin{equation*}
        p_v(j,k|T) = p_1(j,k) \quad \text{for all} \quad (k,j) \in \Omega;
    \end{equation*}
    \item[(ii)] If the frog has type 2. Then, there exist integers $a,b,c$, with $a,b \geq 1$ and $a+b+c \geq 2$, such that
    {\allowdisplaybreaks
    % \begin{equation*}
    \begin{alignat*}{2}
         p_v(0,0|T) &= 1-p, &
         p_v(0,1|T) &= \frac{pa}{\Delta}, \\
         p_v(1,0|T) &=  \frac{pb}{\Delta}, &
         p_v(0,2|T) &= \frac{pc}{\Delta}, \\
         p_v(2,0|T) &= p\left(\frac{\Delta-(a+b+c)}{\Delta}\right). \quad& &
         \end{alignat*}}%
    % \end{equation*}
\end{itemize}
To couple the laws $p_1,p_2$ and $p_v(\cdot|T)$, we use three partitions of $[0,1]$ in intervals. 
Let
{\allowdisplaybreaks
% \begin{equation*}
   \begin{align*}
       \mathcal{Q}_1 &= \{J_1^{0,1}, J_1^{0,2},J_{1}^{2,0}, J_1^{0,0}\},\\
       \mathcal{Q}_2 &= \{J_2^{0,1}, J_{2}^{1,0},J_2^{0,2},J_{2}^{2,0},J_2^{0,0}\},\\
       \mathcal{Q}_{3} &=\{J_3^{0,1},J_3^{0,2},J_3^{1,0},J_3^{2,0},J_3^{0,0}\},
   \end{align*}}%
% \end{equation*}
where the intervals are given by
{\allowdisplaybreaks
\begin{alignat*}{2}
    J_1^{0,1} &= \left[0,\frac{p}{\Delta}\right), &
    J_1^{0,2} &= \left[\frac{p}{\Delta},\frac{p(m-1)}{\Delta}\right),\\
    J_1^{2,0} &= \left[\frac{p(m-1)}{\Delta},p\right), \qquad&
    J_1^{0,0} &= [p,1],\\[0.2cm]
    J_2^{0,1} &= \left[0,\frac{p}{\Delta}\right), &
    J_2^{1,0} &= \left[\frac{p}{\Delta},\frac{2p}{\Delta}\right),\\ J_2^{0,2} &= \left[\frac{2p}{\Delta},\frac{pm}{\Delta}\right), & J_2^{2,0} &= \left[\frac{pm}{\Delta},p\right),\\
    J_2^{0,0} &= [p,1], & &\\[0.2cm]
% \end{alignat*}
% \begin{alignat*}{2}
    J_3^{0,1} &= \left[0,\frac{ap}{\Delta}\right), &
    J_3^{0,2} &= \left[\frac{ap}{\Delta},\frac{(a+c)p}{\Delta}\right),\\ 
    J_3^{1,0} &=\left[\frac{(a+c)p}{\Delta},\frac{(a+b+c)p}{\Delta}\right), \:\:& 
    J_3^{2,0} &= \left[\frac{(a+b+c)p}{\Delta},p \right),\\ 
    J_3^{0,0} &= [p,1]. & &
\end{alignat*}}%
Furthermore, consider an independent sequence of i.i.d.\ random variables $\{U_t\}_{t \geq 0}$ with uniform distribution on [0,1]. 

We note that for all $(j,k) \in \Omega =\{(0,0),(0,1),(1,0),(0,2),(2,0)\}$ we have 
\begin{equation*}
    \bbP(U_{t+1} \in J_i^{j,k}) = p_i(j,k), \quad i =1,2,
\end{equation*}
and
\begin{equation*}
    \bbP(U_{t+1} \in J_3^{j,k}) = p_v(j,k|T).
\end{equation*}
Now, we couple both processes in such a manner that the two-type branching process has more particles than the modified frog model. 
We do this by updating both the processes at time $t+1$ using the same realization of the random variable $U_{t+1}$. Indeed, let $N_t^i$ and $M_t^i$ be the number of particles of type $i$ at time $t$ in the $\MFM{\Dmd}{p}$ and the STTBP, respectively. The $\MFM{\Dmd}{p}$ is started from any pair $(M_0^1,M_0^2)$ such that $|\cvis_0| = \Delta + 2$, and we set $(N_0^1,N_0^2) = (M_0^1,M_0^2)$. We show that, for all $t \geq 0$, the following inequalities hold:
    \begin{equation} \label{E: ineq STTBP}
         N_t^1 \geq M_t^1 \quad \text{and} \quad N_t^1 + N_t^2 \geq M_t^1 + M_t^2.
    \end{equation}
    The proof proceeds by induction on $t$. The base case $t=0$ follows immediately from our assumption on the initial configurations. For the inductive step, suppose that the inequalities in \eqref{E: ineq STTBP} hold for some $t \geq 0$; we then show that they remain valid for $t + 1$.
\begin{enumerate}
    \item [(i)] If the chosen frog in the $\MFM{\Dmd}{p}$ is of type 1, then a type 1 particle in the 2-type branching process is chosen to give birth to its offspring. In this case, the two chosen particles give birth to the same offspring.   
    \item [(ii)] If the chosen frog is of type 2, we have
    \begin{itemize}
        \item [(a)] The particle of the 2-type branching process is type 2 too, then
        \begin{itemize}
            \item if $U_{t+1} \leq \frac{p}{\Delta}$, then one particle of type 2 is created in both processes;
            \item if $U_{t+1} \geq p$, then both chosen particles dies in each process;
            \item if $U_{t+1} \in \left[\frac{p}{\Delta},\frac{ap}{\Delta}\right]$ (with $a \geq 2$) the 2-type branching process creates two type 2 particle, while the $\MFM{\Dmd}{p}$ creates one type 2 particle;   
            \item  if $U_{t+1} \in J_3^{0,2}$ then two particles type 2 are created in each process; 
            \item if $U_{t+1} \in J_3^{1,0}$, then in the 2-type branching process are created one type 1 (if $a = 1$) or two type 1 (if $a \geq 2$, $a+c \geq m$) or two type 2 (if $a \geq 2$, $a+c < m$), while in the $\MFM{\Dmd}{p}$ is created one particle type 1.
            \item if $U_{t+1} \in J_3^{2,0}$, then two particles type 1 are created in each process.
        \end{itemize}
        \item[(b)] The number of particles type 2 in the 2-type branching process is null, then we have to choose a type 1 particle. In this case, we follow the same reasoning as before, observing that $N_t^1 \geq M_t^1 +1$.
    \end{itemize}
\end{enumerate}

\subsection{Survival} \label{SS: Survival}
In this section, we prove the upper bound for $p_c(\Dmd)$ stated in Theorem~\ref{T: bounds}.
More precisely, we show that the frog model on $\Dmd$ survives with positive probability whenever the parameter $p$ is sufficiently close to $1$. Our approach is to construct a suitable sequence of branching processes and show that they are dominated by the frog model.

Roughly speaking, we consider the spreading of the frog model restricted to a tree rooted at $\raiz$ embedded in $\Dmd$, which is isomorphic to the $b$-ary tree with $b = d(m-1)$. To do this, we fix an arbitrary complete graph (\emph{clique} of order $m$) attached to the root, say $\Km^{(1)}$, and define $\Dmd^{+}(\raiz) = \Dmd \setminus \Km^{(1)}$. For a non-root vertex $u$, we denote by $\Dmd^{+}(u) = \{v \in \Dmd: u \leq v\}$ the set of \emph{descendants} of $u$. For $u \in \Dmd$ and $n \geq 1$, we denote by $B_n(u) = \{v \in \Dmd^{+}(u): \dist(u,v) = n\}$ the set of descendants of $u$ at distance $n$. Formally, denote by $\mathbb{T}_{b}^{+}\subset \Dmd$ the rooted $b$-ary tree defined inductively as follows: the root of $\mathbb{T}_b^+$ is $\raiz$ (the root of $\Dmd$), the children of the root are the vertices $u \in B_1(\raiz)$, for a vertex $u \in B_1(\raiz)$ the children of $u$ are the vertices in $B_1(u)$, and so on.
Now, in this restricted model, a frog originally at $v \in B_n(u)$ is activated from the frog located at $u$ when the event $\{\cam{u}{v}\}$ occurs (assuming the frog located at $u$ is active). For each $n \geq 1$, this process starting at the root can be seen as a Galton--Watson branching process, and whose survival implies the survival of the frog model on $\Dmd$. A similar approach was used in \citet{LU-PTBT} to establish an upper bound on the critical probability for the frog model with random initial configuration on biregular trees (see Definition 3.2 therein).
\begin{defn}
\label{D: UB-offspring}
For $u \in \Dmd$ and $v\in B_{n}(u)$, let $u_0=u < u_1 < \cdots < u_{n-1} < u_{n}=v$, be the unique geodesic connecting $u$ and $v$.
For each $\ell=1, 2, \dots, n-1$, we denote by $\{\m{u_0}{u_{\ell}}\}$ the event that $\{\s{u_0}{u_{\ell}}\} \cap \{\ns{u_0}{u_{\ell+1}}\}$.
We define $\{\cam{u_0}{u_{n}}\}$ inductively as follows
\[\{\cam{u_0}{u_{n}}\}:=\{\s{u_0}{u_{n}}\}\cup\bigcup_{\ell=1}^{n-1}\{\m{u_0}{u_{\ell}}, \cam{u_{\ell}}{u_{n}}\}, \text{for } n \geq 2,\]
with the initial condition $\{\cam{u_0}{u_1}\}:=\{\s{u_0}{u_1}\}$.
In addition, we denote the complement of $\{\cam{u_0}{u_n}\}$ by $\{\ncam{u_0}{u_n}\}$.
\end{defn}

With the notations introduced in Definition \ref{D: UB-offspring}, we define, for each $n \geq 1$, a branching process $(Z^{(n)}_\ell)_{\ell \geq 0}$ in which  $Z^{(n)}_0 = 1$, and whose family size distributed as 
\[
Z^{(n)} = \sum_{v \in B_n(u)} \ind\{\cam{u}{v}\}.
\]
\begin{lem} 
\label{L: Prob progeny}
We have  
\begin{align*}
\bbP(\cam{u_0}{u_n}) = U_n(r(p,m,d)),
\end{align*}
where $U_n(r)= r^n(2-r)^{n-1}$.
\end{lem}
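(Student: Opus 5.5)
The plan is to turn the inductive Definition~\ref{D: UB-offspring} into a renewal-type recursion for $U_n:=\bbP(\cam{u_0}{u_n})$, and then solve that recursion with a generating function. Throughout, $r=\fr$ and $u_0<u_1<\dots<u_n$ is the geodesic.

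\emph{Step 1: the events in the definition are disjoint.} Each $u_\ell$ is a cut-vertex separating $u_0$ from $u_{\ell+1}$. So any path of the frog started at $u_0$ that reaches $u_{\ell+1}$ must first pass through $u_\ell$, which gives
\[\{\s{u_0}{u_{\ell+1}}\}\subseteq\{\s{u_0}{u_\ell}\}.\]
Hence the events $\{\m{u_0}{u_\ell}\}$, $\ell=1,\dots,n-1$, together with $\{\s{u_0}{u_n}\}$, are pairwise disjoint: they just record the farthest geodesic vertex reached by the frog at $u_0$. By Lemma~\ref{L: Prob Open} and this nesting,
\[\bbP(\m{u_0}{u_\ell})=r^\ell-r^{\ell+1}=r^\ell(1-r).\]

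\emph{Step 2: independence and the recursion.} The event $\{\m{u_0}{u_\ell}\}$ depends only on the lifetime and walk of the frog originally at $u_0$. Unwinding the definition, $\{\cam{u_\ell}{u_n}\}$ depends only on the frogs originally at $u_\ell,\dots,u_{n-1}$. Since the collections $\{\Xi^u_p\}$ and $\{(S^u_n)\}$ are independent over $u$, these two events are independent. The law of $\{\cam{u_\ell}{u_n}\}$ depends only on the distance $n-\ell$, by the homogeneity of $\Dmd$ and the fact that Lemma~\ref{L: Prob Open} depends only on distance. Combining this with the disjointness from Step 1 gives
\[U_n=r^n+\sum_{\ell=1}^{n-1} r^\ell(1-r)\,U_{n-\ell},\qquad U_1=r.\]

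\emph{Step 3: solving the recursion.} Set $G(x)=\sum_{n\ge1}U_nx^n$ and $a(x)=\sum_{\ell\ge1}r^\ell x^\ell=rx/(1-rx)$. The recursion reads $G=a+(1-r)\,a\,G$, so
\[G(x)=\frac{a}{1-(1-r)a}=\frac{rx}{1-r(2-r)x}.\]
Extracting coefficients gives $U_n=r\,[r(2-r)]^{n-1}=r^n(2-r)^{n-1}$. Alternatively, one can verify the closed form directly by induction on $n$, which is routine.

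The main obstacle is Step~2: justifying carefully that $\{\cam{u_\ell}{u_n}\}$ involves only frogs strictly beyond $u_0$, so that independence holds. One also needs the cut-vertex nesting from Step 1, which makes the union in Definition~\ref{D: UB-offspring} disjoint. Once these two points are settled, the computation is mechanical.
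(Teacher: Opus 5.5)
Your proof is correct and follows the paper's argument. Both use the same decomposition according to the farthest geodesic vertex reached by the frog at $u_0$, the cut-vertex nesting, and independence across frogs to obtain the recursion $U_n=r^n+\sum_{\ell=1}^{n-1}(r^\ell-r^{\ell+1})U_{n-\ell}$; you state the disjointness and independence more explicitly than the paper does. The only difference is cosmetic: you solve the recursion with a generating function, whereas the paper reads off $U_1(r)=r$ and $U_{n+1}(r)=r(2-r)U_n(r)$ directly from it.
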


\begin{proof}
First, note that the functions $U_n$ can be defined inductively by
\begin{align}\label{fnc U induct}
    U_n(r) = r^n + \sum_{\ell = 1}^{n-1}(r^\ell - r^{\ell + 1})U_{n-\ell}(r), \quad n \geq 1.
\end{align}

In fact, defining the functions $U_n$ as in \eqref{fnc U induct} we have $U_1(r) =r$ and $U_{n+1}(r)=r(2-r)U_n(r), \; n\geq 1$.

We proceed by induction on $n$. Clearly, the assertion is true for $n = 1$. Now, let $n \geq 2$ and $\ell = 1,\ldots,n-1$. Since $\{\s{u_0}{u_{\ell+1}}\} \subset \{\s{u_0}{u_{\ell}}\}$, we have  $\bbP(\m{u_0}{u_\ell}) = \bbP(\s{u_0}{u_\ell}) - \bbP(\s{u_0}{u_{\ell+1}})$. Thus, by Definition \ref{D: UB-offspring},
\begin{align*}
 \bbP(\cam{u_0}{u_n}) &= \bbP(\s{u_0}{u_n}) + \sum_{\ell=1}^{n-1} [\bbP(\s{u_0}{u_\ell})-\bbP(\s{u_0}{u_{\ell + 1}})]\bbP(\cam{u_\ell}{u_n}). 
\end{align*}
 Using Lemma \ref{L: Prob Open}, the result follows.
\end{proof}
% To show Lemma \ref{L: Prob progeny}, we proceed by induction on $n$; we omit the details here, since it is proved in \citet{LU-PTBT}. %For the full proof of Lemma \ref{L: Prob progeny} we refer the reader to \citet{LU-PTBT}.

\begin{obs}
Notice that, by finding $p$ such that each of $(Z^{(n)}_{\ell})_{\ell \geq 0}$ is supercritical, we get a sequence $(\bar{p}_n(m,d))_{n\geq 1}$ of upper bounds for $p_c(\Dmd)$. Thus, it will suffice to show that $\bar{p}_n(m,d)$ converges to the upper bound established in Theorem \ref{T: bounds}.
\end{obs}

Now we define, for $r \in \left[0,\frac{1}{d(m-1)}\right]$, the following functions
\begin{equation*}
\begin{aligned}
    u_n(r) &= [U_n(r)]^{1/n} - \frac{1}{d(m-1)},\\
    u_\infty(r) &= r(2-r) - \frac{1}{d(m-1)}.
\end{aligned}  
\end{equation*}

One can readily verify that $u_n$ and $u_\infty$ are increasing functions, with $u_n(0) < 0$ and $u_n(1/d(m-1))>0$. Thus, there exists a unique root $r_n \in (0, 1/d(m-1))$ for each $n$, as well as a unique root $r_\infty \in (0, 1/d(m-1))$ for $u_\infty$. Given the uniform convergence of $u_n$ to $u_\infty$, the sequence of roots $\{r_n\}$ converges to $r_\infty$ as $n \to \infty$. 

We are now in a position to conclude the proof of Theorem \ref{T: bounds}.

\begin{proof}[Proof of the upper bound in Theorem \ref{T: bounds}] 
It is clear that for each $n\geq 1$ the branching process $(Z^{(n)}_{\ell})_{\ell \geq 0}$ is below the frog model on $\Dmd$ and, by Lemma \ref{L: Prob progeny}, it has mean number of offspring per individual given by $\bbE(Z^{(n)}) = [d(m-1)]^nU_n(r(p,m,d))$. Hence, the $\FM{\Dmd}{p}$ survives with positive probability if
\begin{equation*}
 \bbE(Z^{(n)}) = [d(m-1)]^nU_n(r(p,m,d)) > 1.
\end{equation*}

So by solving the equation in $p$,  $u_n(r(p,m,d)) = 0$, we get a sequence of upper bounds for $p_c(\Dmd)$. Since $u_n(r) > 0$ for $r > r_n$, it follows from Lemma \ref{L: Equiv} that
\[
p > \frac{(d+1)(m-1)r_n}{d(m-1)r_n^2 + (m-2)r_n + 1} \Rightarrow \bbP(\FM{\Dmd}{p} \text{ survives}) > 0.
\]
Therefore,
\begin{equation} \label{E: seq UBs}
    p_c(\Dmd) \leq \frac{(d+1)(m-1)r_n}{d(m-1)r_n^2 + (m-2)r_n + 1} = \bar{p}_n(m,d).
\end{equation}
Taking the limit as $n \to \infty$ in the right-hand side of \eqref{E: seq UBs}, the proof is complete.
\end{proof}

\section{Proofs: General case}
\label{S: Proofs General}

% \subsection*{II. General case}

Now we deal with the general model, by assuming that active particles move according to $\alpha$-biased random walks as in \eqref{F: drift-RW}. 
Here, we are interested in studying for which values of $(\alpha,p) \in [0,1]^2$ the frog model survives with positive probability or dies out almost surely. 
To prove Theorems~\ref{T: Extinction} and \ref{T: Survival}, we follow similar steps to those used in Section~\ref{S: Proofs Symmetric}, so we explain the adaptations of the reasoning. 

For the extinction part, we consider again the modified frog model in which a single frog acts (either dying or moving) at each time step. Recall that this does not change the extinction probability of the model, and allows us to couple this modified version together with a dominating subcritical branching process with two types.

\subsection{$\MFM{\Dmd}{\alpha,p}$ dominated by a two-type branching process}\label{SS: alpha-TTBP}

Let us remember that $m \geq 2$, $d \geq 2$, $\Delta = (d+1)(m-1)$, and $\beta = \frac{1-\alpha}{\Delta -1}$. 
We consider a two-type branching process whose offspring distribution is given by
\begin{equation*}
   \begin{aligned}
p_1(0,0) &= 1-p,\\
p_1(1,0) &= 0,\\
p_1(2,0) &= pd(m-1)\beta,\\
p_1(0,1) &= p\alpha,\\
p_1(0,2) &= p(m-2)\beta,
\end{aligned} \qquad
\begin{aligned}
    p_2(0,0) &= 1-p,\\
    p_2(1,0) &= p\beta,\\
    p_2(2,0) &= p[d(m-1)-1]\beta,\\
    p_2(0,1) &= p\alpha,\\
    p_2(0,2) &= p(m-2)\beta.
\end{aligned}
\end{equation*}
We refer to this model as the $\alpha$-TTBP.

Now, the first moment matrix is
\[
M' = \begin{bmatrix}
    2d(m-1)\beta p & (\alpha + 2(m-2)\beta)p\\
    (2d(m-1)-1)\beta p & (\alpha + 2(m-2)\beta)p
\end{bmatrix},
\]
and it is straightforward to show that the largest eigenvalue of $M'$ is the function $\fpsi$ given in~\eqref{F: LE}.
% \[ \psi(M') = \frac{p}{2} \left(2-\alpha +\sqrt{(2-\alpha)^2-\frac{4 (1-\alpha)((m-2)(2-\alpha)+\alpha d(m-1))}{(d(m-1)+m-2)^2}}\right). \]
Thus, we are ready to prove Theorem \ref{T: Extinction}.

\begin{proof}[Proof of Theorem \ref{T: Extinction}]
Analysis similar to that in Section~\ref{S: Proofs Symmetric} shows that the frog model with drift is dominated by the $\alpha$-TTBP defined in Section \ref{SS: alpha-TTBP}.
So the frog model dies out almost surely for $(\alpha,p)$ such that $\fpsi < 1$.
\end{proof}

The following result is a generalization of Lemma \ref{L: Prob Open} for the frog model with death and drift.
 
\begin{lem}
\label{L: Prob Open Drift}
Let \(u\) and \(v\) be any non-root vertices of \(\Dmd\) with \(u < v\), \(\dist(u, \raiz) \geq 2\), and \(\dist(u, v) = n \geq 1\).
Then,
\[ \bbP(\s{u}{v}) = [\fB]^n, \]
where \(\fB\) is given in~\eqref{F: function B}.
\end{lem}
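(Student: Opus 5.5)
The plan mirrors the proof of Lemma~\ref{L: Prob Open}. Write $u=u_0<u_1<\dots<u_n=v$ for the geodesic. As in that proof, conditioning on the geometric lifetime gives $\bbP(\s{u}{v})=\bbE(p^{\tau_{uv}})$, where $\tau_{uv}$ is the hitting time of $v$ by the $\alpha$-biased walk started at $u$. Each $u_j$ is a cut-vertex separating $u$ from $v$, so $\tau_{uv}=\sum_{j}\tau_{u_{j-1}u_j}$. By the strong Markov property these summands are independent. Each step $u_{j-1}\to u_j$ is a move from a vertex to one of its children, and the local structure around such a pair (parent, siblings, children cliques) is the same for all $j$ once we stay away from the root. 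Hence every factor should equal one generating function $B=\bbE(p^{\tau_{uu_1}})$, and the lemma reduces to identifying $B$ with $\fB$.

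To compute $B$, I first introduce two auxiliary generating functions.
\begin{itemize}
\item[(a)] $A$ is the generating function of the time for a walk at a non-root vertex $w$ to hit its parent $w^-$. A first-step analysis gives
\[ A=p\alpha+p\beta(m-2)A+p\beta d(m-1)A^2 . \]
A step to a sibling leaves the walk facing the same parent, and a step to a child must first return to $w$ (factor $A$) and then reach $w^-$ (another factor $A$). The root of this quadratic that vanishes as $p\to0^+$ is exactly $\fA$.
\item[(b)] $C$ is the generating function of the time to go from a vertex $s$ to a sibling $s'$ of it. From $s$ the walk can step directly to $s'$ (weight $\beta$), step to the common parent (weight $\alpha$, after which it needs time $B$), step to one of the $m-3$ other siblings (factor $C$ again), or step to a child (factor $A\,C$). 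Thus
\[ C=\frac{p(\beta+\alpha B)}{\fgamma}, \qquad \fgamma=1-\beta p\,[d(m-1)\fA+m-3]. \]
\end{itemize}

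Next I condition on the first step from $u$, using that $u$ has one parent, $m-2$ siblings and $d(m-1)$ children, one of which is $u_1$.
\begin{itemize}
\item Stepping to $u_1$ contributes $p\beta$.
\item Stepping to one of the $m-2$ siblings of $u_1$ (same clique) contributes $p\beta(m-2)C$.
\item Stepping to one of the $(d-1)(m-1)$ children of $u$ in other cliques contributes $p\beta(d-1)(m-1)AB$.
\item Stepping to one of the $m-2$ siblings of $u$ contributes $p\beta(m-2)CB$.
\item Stepping to the parent $u^-$ contributes $p\alpha B^2$: the walk must first return from $u^-$ to its child $u$ and then go from $u$ to $u_1$.
\end{itemize}
This gives
\[ B=p\beta+p\beta(m-2)C+p\beta(d-1)(m-1)\fA\, B+p\beta(m-2)C B+p\alpha B^2 . \]
Substituting $C$ and multiplying by $\fgamma$ yields $\fQ B^2+\fR B+\fS=0$, with coefficients matching~\eqref{F: function B}; I have checked the collection of terms. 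Since $B\to0$ as $p\to0^+$, we must take the root $\fB$ given by the ``$+$'' sign (note that $\fQ<0$). The case $p=1$ follows by letting $p\to1^-$, as in Lemma~\ref{L: Prob Open}.

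The main obstacle is the parent term $p\alpha B^2$ and, more generally, the homogeneity claim. The equation for $B$ uses the fact that from $u^-$ the time to reach $u$ has the same law $B$. This requires $u^-$ to be a non-root vertex whose neighbourhood looks like that of $u$, which is where the hypothesis $\dist(u,\raiz)\ge2$ enters. The root's different rule (it stays put with probability $\alpha$) could still be felt after repeated descents toward it. I would handle this by first proving the recursion on the graph in which the root is replaced by a homogeneous ``infinite-ancestor'' structure, where every vertex is non-root and all steps have identical law. I would then argue that the functional equation, together with the choice of the analytic branch vanishing at $p=0$, pins down $B$ uniquely. If the root effect turns out not to cancel exactly, the argument still gives the identity with the paper's implicit convention that all vertices involved behave as non-root vertices.
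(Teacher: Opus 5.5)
Your derivation is the paper's proof essentially line for line. You use the same generating functions $A$, $B$, $C$, the same first-step system \eqref{F: Eq-FGP-Drift}, the same elimination $C=p(\alpha B+\beta)/\fgamma$, and the same quadratic $\fQ B^2+\fR B+\fS=0$ with the same branch choice; your collection of terms is correct. However, the step you single out as the main obstacle is a genuine gap. The paper's proof has exactly the same gap, and it cannot be closed. Both arguments need every factor $\tau_{u_{j-1}u_j}$, the return time from $u^-$ to $u$ in the term $p\alpha B^2$, and the return through the common parent inside $C$, to share one generating function $B$. The hypothesis $\dist(u,\raiz)\ge 2$ only makes $u^-$ a non-root vertex. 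Returning from $u^-$ to $u$ involves returning from the parent of $u^-$, and so on up to $\raiz$, where the rule is different: stay with probability $\alpha$, and move to each of the $\Delta$ children with probability $(1-\alpha)/\Delta\neq\beta$. Let $B_k$ be the generating function for a vertex at level $k$. Your first-step analysis then really gives a recursion determining $B_k$ from $B_{k-1}$ (via $p\alpha B_{k-1}B_k$ and the sibling terms). This recursion starts from a root value $B_0$ that is not the fixed point, and the branch argument identifies the fixed point, not its $k$-th iterate. So the root effect does not cancel, and the lemma is false as stated. For instance, take $m=d=2$ and $\alpha=0.8$, so $\beta=0.1$. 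The formulas give $A(\alpha,1)=1$ and $B(\alpha,1)=\beta/\alpha=1/8$, matching the paper's own $p=1$ expression. On the rooted graph, however, the distance to $\raiz$ moves down with probability $0.8$ and up with probability $0.2$, staying put at $\raiz$ with probability $0.8$. Hence the walk is irreducible and recurrent and hits $v$ almost surely, i.e.\ $\bbP(\s{u}{v})=1$ at $p=1$. By monotone convergence in $p$ and continuity of $B(\alpha,\cdot)$, the identity also fails for all $p<1$ close to $1$. At moderate parameters ($m=d=2$, $\alpha=1/2$, $p=0.9$) one finds $\fB\approx 0.3134$, while the true level-$2$ value is $B_2\approx 0.3176$.

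Your fallback is the right repair, but it proves a different statement. Consider the rootless tree of cliques in which every vertex has one parent clique and $d$ child cliques. There the walk is homogeneous and your argument is a complete proof, once you also check that the small root of the quadratic is the correct branch on all of $(0,1)$, not only near $p=0$ (continuity of $B$ in $p$ plus a nonvanishing discriminant). On $\Dmd$ itself, the argument yields $\bbP(\s{u}{v})=\prod_{j=0}^{n-1}B_{k+j}$ with $k=\dist(u,\raiz)$, and for $p<1$ one has $|B_j-\fB|\le p^{j}$. To see this, couple the rooted walk with the rootless one. The two hitting times coincide unless $\raiz$ is reached first, which takes at least $j$ steps, and on that event both values of $p^{\tau}$ are at most $p^{j}$. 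So the lemma should be stated either for the rootless structure or asymptotically as $\dist(u,\raiz)\to\infty$.
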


\begin{proof} 
Consider $u,v \in \Dmd$ with $\dist(u,v) = n$ and let $u=u_0 <\ldots < u_n=v$ be the vertices on the geodesic from $u$ to $v$. Let $\tau_{uv}$ be the first time when a $\alpha$-biased random walk  (moving according to \eqref{F: drift-RW}) starting from $u$ visits $v$. We have, for $p < 1$,
\[
\bbP(\s{u}{v}) = \bbE(p^{\tau_{uv}}).
\]
We note that $\tau_{uv}$ is a sum of $n$ independent copies of $\tau_{uu^+}$ in which $u^+$ is a fixed children neighbor of $u$. Thus, $\bbP(\s{u}{v}) = [\bbE(p^{\tau_{uu^+}})]^n$.
Now, we need the following probability generating functions 
\begin{equation*}
A = \bbE(p^{\tau_{uu^-}}), \quad B = \bbE(p^{\tau_{uu^+}}), 
\quad \text{and} \quad C = \bbE(p^{\tau_{u^*u}}), 
\end{equation*}
where $u^-, u^+$ and $u^*$ denote the parent vertex, a fixed child, and a fixed sibling of $u$, respectively. 

Let $\beta = \frac{1-\alpha}{\Delta - 1}$. By conditioning on the first step of the $\alpha$-biased random walk, we obtain the following non-linear system of equations:
\begin{equation}
\begin{aligned}
\label{F: Eq-FGP-Drift}
A &= p\alpha + p(m-2)\beta A + [pd(m-1)]\beta A^2,\\
    B &= p\beta + p(d-1)(m-1)\beta A B + p(m-2)\beta C B + p(m-2)\beta C + p\alpha B^2,\\
    C &=p\beta + p\alpha B + p(m-3)\beta C+pd(m-1)\beta A C.
\end{aligned}
\end{equation}

% \vermelho{Nova versão:}
Solving the first equation of~\eqref{F: Eq-FGP-Drift} and using the fact that the probability generating function of $\tau_{uu^-}$ satisfies $\lim_{p\to 0^-}\bbE(p^{\tau_{uu^-}}) = 0$, we get $A = \fA$ as defined in~\eqref{F: function B}.
From the third equation, it follows that
\begin{equation}
\label{F: function C}
C = \frac{p \, (\alpha B + \beta)}{\fgamma}.
\end{equation}
We observe that $\fgamma > 0$, so replacing \eqref{F: function C} in the second equation of~\eqref{F: Eq-FGP-Drift} and simplifying properly, we obtain
\begin{equation*}
\fQ \, B^2 + \fR \, B + \fS = 0.
\end{equation*}
Thus, using that $\lim_{p\to 0^-}\bbE(p^{\tau_{uu^+}}) = 0$, we conclude that $B = \fB$ is given by~\eqref{F: function B}.
% \[ 
% \bbE(p^{\tau_{uu^-}}) = \fA, \quad \bbE(p^{\tau_{uu^+}}) = \fB, \; \text{and} \; \bbE(p^{\tau_{uu^*}}) = \fC,
% \]
% where \(\fA\), \(\fB\) and \(\fC\) are defined in~\eqref{F: functions ABC}.
% The proof is finished for $p < 1$. 
% Finally, for $p = 1$, we have (\vermelho{done with the help of Gemini AI (conferir)}),
% \[\bbP(\s{u}{v}) =\bbP(\tau_{uv} < \infty) =\lim_{p \to 1^-}  \bbE(p^{\tau_{uv}})= 
% \left\{\begin{array}{cl}
%  \left(\frac{1}{d(m-1) - (m - 2)}\right)^n & \text{if } \alpha \leq \frac{1}{m},\\[12pt]  \left(\frac{(m-1)(1-\alpha)}{\alpha [d(m-1) + m - 2]}\right)^n  & \text{if } \alpha \geq \frac{1}{m}.
% \end{array}\right.
% \]
% \vermelho{Nova versão -- verificar. Se estiver correta, acho melhor a primeira fórmula.}
Finally, for $p = 1$, we compute with the software Mathematica that
\[\bbP(\s{u}{v}) =\bbP(\tau_{uv} < \infty) =\lim_{p \to 1^-}  \bbE(p^{\tau_{uv}})=
 \left(\min\left\{ \frac{\beta}{\alpha}, \frac{1}{d(m-1)} \right\}\right)^n.
\]
% \[\bbP(\s{u}{v}) =\bbP(\tau_{uv} < \infty) =\lim_{p \to 1^-}  \bbE(p^{\tau_{uv}})= 
% \left\{\begin{array}{cl}
%  \left(\frac{1-\alpha}{\alpha [d(m-1) + m - 2]}\right)^n & \text{if } \alpha \geq \frac{d (m-1)}{2 d (m-1)+m-2},\\[12pt]  
%   \left(\frac{1}{d (m-1)}\right)^n  & \text{otherwise}.
% \end{array}\right.
% \]
This completes the proof.
\end{proof}

The graphs of functions \(\fA\), \(\fB\) and \(\fC\) for $m=d=2$ are depicted in Figure~\ref{Fig: ABC}.
% related to the probability of an edge in the oriented frog model being open (seen as a percolation model); we refer the reader to Lemma~\ref{L: Prob Open Drift} for details.
% Their graphs for \(d = 2\) are depicted in Figure~\ref{Fig: ABC}

\begin{figure}[htbp]
     \centering
     \begin{subfigure}[b]{0.45\textwidth}
         \centering
         \includegraphics[width=\textwidth]{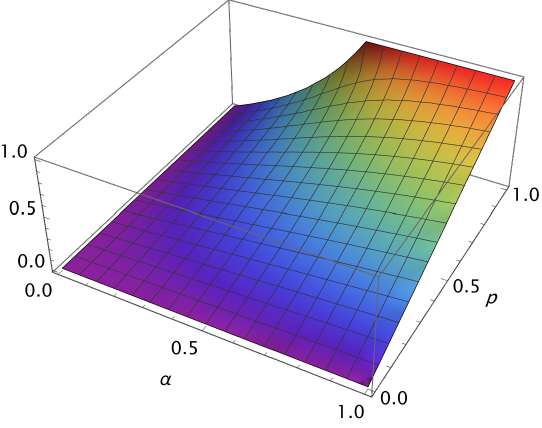}
         \caption{\(\fA\)}
         \label{fig:imagem1}
     \end{subfigure}
     \hfill
     \begin{subfigure}[b]{0.45\textwidth}
         \centering
         \includegraphics[width=\textwidth]{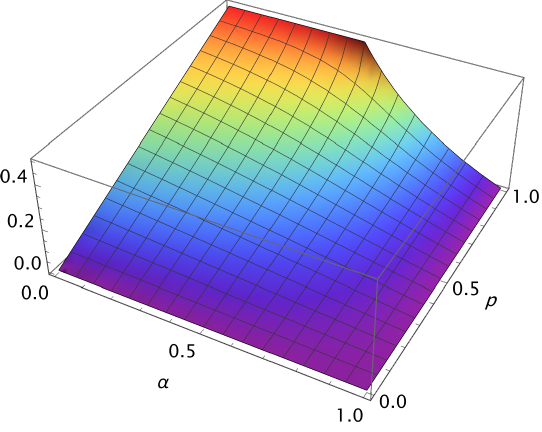}
         \caption{\(\fB\)}
         \label{fig:imagem2}
     \end{subfigure}
     
     \vspace{0.5cm}
     
     \begin{subfigure}[b]{0.45\textwidth}
         \centering
         \includegraphics[width=\textwidth]{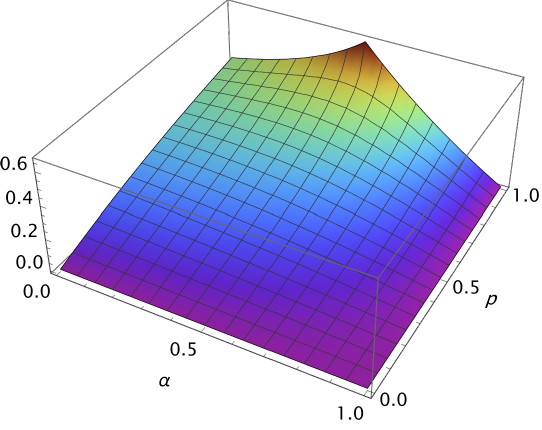}
         \caption{\(\fC\)}
         \label{fig:imagem3}
     \end{subfigure}
     \caption{Functions $\fA,\fB$ and $\fC$ for $m=d=2$.}
     \label{Fig: ABC}
\end{figure}

\vspace{.5cm}
We now turn to the proof of Theorem~\ref{T: Survival}.
 
\begin{proof}[Proof of Theorem~\ref{T: Survival}] 
  For each $n \geq 1$, let $(W_\ell^{(n)})_{\ell \geq 0}$ denote a branching process with $W_0^{(n)} = 1$, whose offspring distribution is given by 
  \[
  W^{(n)} = \sum_{v \in B_n(u)} \ind\{\cam{u}{v}\},
  \] 
where the event $\{\cam{u}{v}\}$ is given in Definition~\ref{D: UB-offspring}. In this case, it follows that the mean offspring number is given by
\[
\bbE(W^{(n)}) = [d(m-1)]^nU_n(\fB),
\]
where $U_n(r) = r^n(2-r)^{n-1}$ and $\fB$ is given in ~\eqref{F: function B}. By imposing that the Galton--Watson branching process is supercritical, the result follows.
\end{proof}

\bibliography{Bib-Frogs-Free-Product}
\bibliographystyle{plainnat}
\end{document}